\newtheorem{theorem}{Theorem}
\newtheorem{lemma}{Lemma}
\newtheorem{proposition}{Proposition}
\numberwithin{equation}{section}
\begin{document}

\title{Nearly optimal bounds on the Fourier sampling numbers of Besov spaces
}

\author{Jonathan W. Siegel \\
 Department of Mathematics\\
 Texas A\&M University\\
 College Station, TX 77843 \\
 \texttt{jwsiegel@tamu.edu} \\
}

\maketitle

\begin{abstract}
    Let $\mathbb{T}^d$ denote the $d$-dimensional torus. We consider the problem of optimally recovering a target function $f^*:\mathbb{T}^d\rightarrow \mathbb{C}$ from samples of its Fourier coefficients. We make classical smoothness assumptions on $f^*$, specifically that $f^*$ lies in a Besov space $B^s_\infty(L_q)$ with $s > 0$ and $1\leq q\leq \infty$, and measure recovery error in the $L_p$-norm with $1\leq p\leq \infty$. Abstractly, the optimal recovery error is characterized by a `restricted' version of the Gelfand widths, which we call the Fourier sampling numbers. Up to logarithmic factors, we determine the correct asymptotics of the Fourier sampling numbers in the regime $s/d > 1 - 1/p$. We also give a description of nearly optimal Fourier measurements and recovery algorithms in each of these cases. In the other direction, we prove a novel lower bound showing that there is an asymptotic gap between the Fourier sampling numbers and the Gelfand widths when $q = 1$ and $p_0 < p\leq 2$ with $p_0 \approx 1.535$. Finally, we discuss the practical implications of our results, which imply a sharper recovery of edges, and provide numerical results demonstrating this phenomenon.\\\\
    \textbf{Keywords}: compressive sensing, $n$-widths, sampling numbers, Fourier measurements\\
    \textbf{Mathematics Subject Classification}: 41A46, 42A05, 46B09, 46N40, 65T40
\end{abstract}

\section{Introduction}
A fundamental problem in approximation theory, compressive sensing, and functional analysis is the following: Suppose that we are tasked with identifying an unknown function $f^*$. We obtain information about $f^*$ by observing the values of a finite set of measurements $\lambda_i(f^*)$ for $i=1,...,n$. What is the optimal set of measurements to choose and how can we optimally recover $f^*$ from these measurements?

In order to properly formulate this problem, we need to specify three pieces of information. Namely, which assumptions are made on the target function $f^*$, which measure of error will we use to evaluate our estimate of $f^*$, and what types of measurements $\lambda_i$ are allowed. By making different choices for these three problem parameters, we arrive at a variety of interesting problems, many of which serve as useful models of practical imaging and measurements systems.

Classically, a standard assumption on the function $f^*$ is that it lies in the unit ball of a Sobolev or Besov space. To simplify the presentation, we will only consider Besov spaces and restrict our attention to the case where the domain of $f^*$ is the $d$-dimensional torus $\mathbb{T}^d:= (\mathbb{R}/2\pi \mathbb{Z})^d$. All $L_p$ and $L_q$ norms will be taken over $\mathbb{T}^d$, normalized to have unit measure, i.e.,
\begin{equation}
    \int_{\mathbb{T}^d} := \frac{1}{(2\pi)^d}\int_{[0,2\pi]^d}.
\end{equation}

For parameters $1\leq q,r\leq \infty$ and index $s > 0$, we define the Besov semi-norm of a function $f:\mathbb{T}^d\rightarrow \mathbb{C}$ as
\begin{equation}\label{besov-semi-nor-definition}
    |f|_{B^s_r(L_q)} := \begin{cases}
        \left(\int_{0}^\infty [t^{-s}\omega_k(f,t)_q]^r\frac{dt}{t}\right)^{1/r} & r < \infty\\
        \sup_{t > 0} t^{-s}\omega_k(f,t)_q & r = \infty.
    \end{cases}
\end{equation}
Here the $k$-th order modulus of smoothness in $L_q$ is defined by
\begin{equation}
    \omega_k(f,t)_q := \sup_{|h|\leq t} \|\Delta_h^k f\|_{L_q},
\end{equation}
where $\Delta_h$ is the finite difference operator given by $\Delta_h f(x) = f(x + h) - f(x)$. It is well-known that any choice of $k$ greater than $s$ leads to an equivalent semi-norm in \eqref{besov-semi-nor-definition} (see for instance \cite{devore1993constructive}, Chapter 2). The Besov norm is then typically defined by
\begin{equation}
    \|f\|_{B^s_r(L_q)} := \|f\|_{L_q} + |f|_{B^s_r(L_q)}.
\end{equation}
In order to simplify the presentation, we will restrict our attention to the case $r = \infty$ in the following. We will make the assumption that $f^*$ lies in the unit ball of a such a Besov space, i.e., 
\begin{equation}
    f^*\in K_q^s := \{f:\mathbb{T}^d\rightarrow \mathbb{C},~\|f\|_{B^s_\infty(L_q)} \leq 1\}.
\end{equation}
We would like to recover $f^*$, with error measured in the $L_p$-norm, from a finite set of measurements $m_i := \lambda_i(f^*)$. In order for any practically implementable method to recover $f^*$ in this setting, the Besov ball $K^s_q$ must be a compact subset of $L_p$, which is guaranteed by the embedding condition
\begin{equation}\label{sobolev-embedding-condition}
    \frac{s}{d} > \left(\frac{1}{q} - \frac{1}{p}\right)_+.
\end{equation}
Next, let us consider the types of measurements which will be allowed. Practical imaging and measurements systems, such as MRI or CT, interact with the ground truth $f^*$ via a typically non-linear PDE. Despite this, in many cases the measurement process can be accurately modeled by a linear functional applied to $f^*$. For example, MRI measurements are accurately modeled by samples of the Fourier transform of the magnetization distribution \cite{twieg1983k,candes2006robust,lustig2007sparse}, and CT measurements are accurately modeled by samples of the Radon transform of the attenuation coefficient of the underlying material (see \cite{natterer2001mathematical}, Chapter 3 or \cite{kuchment2013radon}). 

Consequently, in this work we will restrict ourselves to measurements $\lambda_i$ which are linear functionals of $f^*$. If arbitrary linear functionals $\lambda_i\in L_{p^*}$ are allowed (here $1/p + 1/p^* = 1$, recall that $K^s_q$ is viewed as a subset of $L_p$), then the optimal recovery is governed by the Gelfand widths \cite{lorentz1996constructive,pinkus2012n}. We recall that for a compact set $K\subset X$ in a Banach space $X$, the Gelfand widths are defined by
\begin{equation}
    d^n(K)_X := \inf_{\lambda_1,...,\lambda_n\in X^*} \sup\{\|f\|_X:~f\in K~\text{and}~\lambda_i(f) = 0~\text{for $i=1,...,n$}\}.
\end{equation}
It is known that if $K$ is convex and centrally symmetric, then the best possible reconstruction error from $n$ linear measurements that one can obtain for the class $K$ is $d^n(K)_X$ (up to potentially a factor of $2$), and the reconstruction can be obtained by choosing any function in $K$ which satisfies the observed measurements \cite{micchelli1976optimal,traub1973theory,binev2024optimal}. A general recovery algorithm for linear measurements is described in Section \ref{numerical-experiments-section}.

Applying this to the setting $K = K^s_q\subset L_p$, we obtain that the optimal recovery error of $f^*\in K^s_q$ with respect to the $L_p$-norm is given by the Gelfand widths $d^n(K^s_q)_{L_p}$. The asymptotic decay of these widths were determined following the seminal work of Kashin \cite{kashin1977widths}. They are given by (see \cite{lorentz1996constructive}, Chapter 14 or \cite{pinkus2012n,dung2018hyperbolic})
\begin{equation}\label{gelfand-widths-asymptotics}
    d^n(K^s_q)_{L_p} \eqsim \begin{cases}
        n^{-s/d + (1/q - 1/p)_+} & q \geq 2\\
        n^{-s/d + (1/2 - 1/p)_+} & q < 2,
    \end{cases}
\end{equation}
as long as $s$ is large enough that $B^s_\infty(L_1)$ compactly embeds into $L_p$, i.e., when $s/d > 1-1/p$. In the case when $q=1$, which is of greatest interest to us, this does not require any restriction on $s$ beyond the compact embedding condition \eqref{sobolev-embedding-condition}. If $q > 1$, it is possible for \eqref{sobolev-embedding-condition} to hold when $s/d \leq 1-1/p$, and in this case the Gelfand widths are more subtle. For instance, in the boundary case when $s/d = 1-1/p$, additional logarithmic factors can appear \cite{malykhin2021kolmogorov,malykhin2025kolmogorov}. In this paper, we will restrict ourselves to the case $s/d > 1-1/p$ where the Gelfand widths behave like \eqref{gelfand-widths-asymptotics}, and leave the remaining cases as future work.

From a practical perspective, a disadvantage of the Gelfand widths is that they allow measurements which are general linear functionals. In the regime where $q \geq 2$ or $p \leq q$, nearly optimal measurements can be explicitly given, for instance the $n$ lowest Fourier modes will achieve the asymptotic bound \eqref{gelfand-widths-asymptotics}. However, in the regime $q < 2$ and $p > q$, measurements achieving the asymptotic bound \eqref{gelfand-widths-asymptotics} cannot be explicitly given, and instead can only be constructed using a probabilistic argument \cite{kashin1977widths,lorentz1996constructive}. Such measurements would be difficult to implement in any practical imaging or signal processing system. Consequently, it is of significant interest to consider `restricted' versions of the Gelfand widths, which make additional restrictions on the types of linear measurements allowed.

A well-known example of such restricted widths are the sampling numbers, for which the linear functionals $\lambda_i$ are restricted to be point evaluations. Specifically, the sampling numbers of a compact, convex, and centrally symmetric subset $K \subset C(\mathbb{T}^d)$ with respect to the $L_p$-norm is defined by
\begin{equation}
    s_n(K)_{L_p} := \inf_{x_1,...,x_n\in \mathbb{T}^d} \sup\{\|f\|_{L_p}:~f\in K~\text{and}~f(x_i) = 0~\text{for $i=1,...,n$}\}.
\end{equation}
We remark that for Besov balls, the condition $K^s_q\subset C(\mathbb{T}^d)$ requires the condition $s/d > 1/q$ (which is stronger than the embedding condition \ref{sobolev-embedding-condition}). Sampling numbers are an important topic in approximation theory, and have been studied both for unit balls of Besov spaces \cite{vybiral2007sampling,novak2006function,bonito2025convergence} and for more general convex subsets of $C(\mathbb{T}^d)$ \cite{ullrich2025sampling,krieg2021function,dolbeault2023sharp,krieg2021function2}. Notably, the sampling numbers for Besov balls are given by (see \cite{vybiral2007sampling,novak2006function,bonito2025convergence})
\begin{equation}\label{sampling-numbers-bound-equation}
    s_n(K^s_q)_{L_p} \eqsim n^{-s/d + (1/q-1/p)_+}.
\end{equation}
Observe that the sampling numbers are asymptotically much worse than the Gelfand widths in the regime where $1\leq q < 2$ and $p > q$. Thus, restricting to point samples significantly worsens the recovery error in this regime. The practical implications of this will be discussed further in Section \ref{numerical-experiments-section}.

While sampling numbers are a good model in many practical applications where point samples are available, many imaging systems involve measurements which are more complicated than point samples. The examples we have in mind are MRI and CT, which are accurately modeled by samples of the Fourier and Radon transforms, respectively. Motivated by this, we consider `restricted' variants of the Gelfand widths which correspond to these measurement types. Specifically, we define the Fourier sampling numbers for a class $K\subset L_p$ via
\begin{equation}
    s^F_n(K)_{L_p} := \inf_{\xi_1,...,\xi_n\in \mathbb{Z}^d} \sup\{\|f\|_{L_p}:~f\in K~\text{and}~\hat{f}(\xi_i) = 0~\text{for $i=1,...,n$}\},
\end{equation}
where
\begin{equation}
    \hat{f}(\xi) := \int_{\mathbb{T}} f(x)e^{-i\xi\cdot x}dx
\end{equation}
are the Fourier coefficients of $f$. This quantity measures how accurately a function $f\in K$ can be recovered from samples of its Fourier coefficients. The key question is what error can be obtained, and which frequencies $\xi_i$ should be sampled to obtain the optimal error. 

One can also formulate this problem on a bounded domain $\Omega\subset \mathbb{R}^d$, where samples of the Fourier coefficients are replaced by samples of the Fourier transform of $f$ (set equal to $0$ outside of $\Omega$). If $f$ is compactly supported on $\Omega$, then by making $f$ periodic on a grid which contains $\Omega$ we are reduced to the problem on $\mathbb{T}^d$ considered in this work. For $f$ which are defined on $\Omega$ but not compactly supported on $\Omega$, we leave the corresponding problem to future work. On a bounded domain we can also consider a corresponding problem for recovering $f$ from samples of the Radon transform, which corresponds to recovery via CT. This is another interesting open problem, which we will not consider further here.

The goal of this work is to prove the following bound on the Fourier sampling numbers of Besov balls $K^s_q$ in $L_p$. As mentioned before, we will restrict ourselves here to the setting where the Gelfand widths are given by \eqref{gelfand-widths-asymptotics}, i.e., where $s > 1-1/p$. Determining the correct asymptotics for the Fourier sampling numbers outside of this regime is left as an open problem.

\begin{theorem}\label{main-theorem}
    Suppose that $1 \leq q, p\leq \infty$ and $s/d > 1 - 1/p$. Then for $n \geq 4$ we have the following upper bounds on the Fourier sampling numbers:
    \begin{equation}\label{main-upper-bound-equation}
        s^F_n(K^s_q)_{L_p} \leq C\begin{cases}
            n^{-s/d + (1/q - 1/p)_+} & q\geq 2~\text{or}~p\leq q\\
            n^{-s/d}\log(n)^{1-1/p}\log(\log(n))^{5(1-1/p)} & 1\leq q < p \leq 2\\
            n^{-s/d + (1/2-1/p)}\sqrt{\log(n)}\log(\log(n))^{5/2}& 1\leq q < 2 < p.
        \end{cases}
    \end{equation}
    Here the constant in \eqref{main-upper-bound-equation} depends only upon $s,q,p$ and $d$, but not on $n$, and the logarithms are taken to base $2$.
\end{theorem}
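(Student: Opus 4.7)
The approach uses a Littlewood--Paley decomposition $f^{\ast} = \sum_j \Delta_j f^{\ast}$, where $\Delta_j$ projects onto Fourier modes $\xi$ with $|\xi|_\infty \asymp 2^j$. The Besov hypothesis $\|f^\ast\|_{B^s_\infty(L_q)} \leq 1$ is equivalent, up to constants, to $\|\Delta_j f^\ast\|_{L_q} \lesssim 2^{-js}$ for every $j\geq 0$, and each $\Delta_j f^\ast$ has Fourier support of cardinality $N_j \asymp 2^{jd}$. The theorem reduces to choosing, for each $j$, a sampling set of size $m_j$ from block $j$ with $\sum_j m_j \leq n$, together with a per-block recovery rule, so that the aggregated $L_p$-error matches \eqref{main-upper-bound-equation}.

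In Case 1 ($q \geq 2$ or $p \leq q$), I would sample all frequencies in the low-pass block $\{\xi : |\xi|_\infty \leq N\}$ with $N \asymp n^{1/d}$ and reconstruct via the Fourier partial sum $S_N f^\ast$. Standard Jackson-type inequalities for Besov functions combined with the Sobolev embedding $B^s_\infty(L_q) \hookrightarrow L_p$ yield the stated rate without any logarithmic loss.

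In Cases 2 and 3 ($q < 2$ and $q < p$) the Gelfand-width rate cannot be attained by a pure low-pass, and compressive sensing must be introduced in the high-frequency blocks. Fix a cut-off level $J \asymp (1/d)\log_2 n$, sample all frequencies in $\{|\xi|_\infty \leq 2^J\}$, and for each $j > J$ sample $m_j$ frequencies uniformly at random from the $\asymp 2^{jd}$ candidates, reconstructing $\Delta_j f^\ast$ via basis pursuit. The decisive ingredient is an instance-optimal sparse recovery theorem for randomly subsampled Fourier matrices --- the RIP for bounded orthonormal systems in the tradition of Candes--Tao and Rudelson--Vershynin, with the sharpest known polylogarithmic dependence due to Bourgain and Haviv--Regev. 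Applied to each dyadic block, this yields an $L_2$-recovery of $\Delta_j f^\ast$ of order $2^{-js}$ times polylog factors in $2^{jd}$, whereas a trivial triangle-inequality bound gives the same order in $L_1$ with no logarithm. For $p \leq 2$ (Case 2), log-convex interpolation between the $L_1$- and $L_2$-bounds produces an error of the form $2^{-js}(\log 2^{jd})^{1-1/p}$, while for $p > 2$ (Case 3) a Nikolskii-type inequality inside a block of cardinality $2^{jd}$ introduces an extra factor $2^{jd(1/2-1/p)}$. The assumption $s/d > 1-1/p$ makes the resulting sum over $j > J$ geometric with top-level dominance, and combining this with the low-pass approximation error gives the claimed bounds.

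The main obstacle is pinning down the logarithmic exponents. First, the $\log\log(n)^5$ factors are inherited from the sharpest known polylog dependence in the RIP for random subsampled Fourier matrices, and a looser CS bound would spoil the $\log\log$ exponent. Second, recovery must be measured in $L_p$ rather than $L_2$, which is why the log-convex interpolation between $L_1$ and $L_2$ is essential in Case 2 (producing the exponent $1-1/p$ on $\log n$) and why the Nikolskii embedding is needed in Case 3. Third, the budget allocation $\{m_j\}$ must be tuned so that the logarithm appears only at the top active level and is not multiplied across all $\log_2 n$ levels; this is arranged by giving a geometrically decreasing fraction of the sampling budget to levels past $J$ and invoking $s/d > 1 - 1/p$ to keep the series convergent.
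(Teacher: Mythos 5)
Your overall architecture matches the paper's proof closely: dyadic frequency decomposition, full sampling of all modes up to $2^{k_0}\asymp n^{1/d}$, a geometrically decaying sampling budget in the higher bands, $L_1$--$L_2$ interpolation for $p\le 2$, Nikolskii for $p>2$, and geometric summation under $s/d>1-1/p$. However, there is a genuine gap at the decisive step: the logarithmic factors in the theorem, in particular the $\log(\log(n))^{5(1-1/p)}$ and $\log(\log(n))^{5/2}$ terms, cannot be obtained from the RIP results you cite (Cand\`es--Tao, Rudelson--Vershynin, Bourgain, Haviv--Regev). Those theorems require $m\gtrsim s\,\mathrm{polylog}(N)$ samples for RIP at sparsity $s$, so the null space property they yield on a block of size $N$ with $m$ samples gives an $L_1$--$L_2$ comparison constant of order $\sqrt{N\,\mathrm{polylog}(N)/m}$; at the critical band, where $m\asymp N\asymp n$, this produces extra powers of $\log n$, not $\log\log n$. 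The paper explicitly notes that the RIP route only gives the bound ``with additional logarithmic factors'' and instead uses Theorem 3 of Gu\'edon--Mendelson--Pajor--Tomczak-Jaegermann: for a bounded orthonormal system of size $N$ there exists a subset $S$ with $|S|\le n$ such that $\|h\|_{L_2}\le C\mu(\log\mu)^{5/2}\|h\|_{L_1}$ for every $h$ spanned by the complement, with $\mu=\sqrt{N/n}\sqrt{\log n}$. At the top active band $\mu\asymp\sqrt{\log n}$, so $(\log\mu)^{5/2}\asymp(\log\log n)^{5/2}$, which is exactly where the stated exponents come from. Note also that this is a null-space bound requiring no recovery algorithm: the Fourier sampling numbers are defined by the worst function in $K^s_1$ vanishing at the samples, so your per-block basis-pursuit reconstruction is an unnecessary detour, and the instance-optimality guarantee it provides is anyway no stronger than the null space property for this purpose.

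Two smaller points. First, your sampling budget implicitly runs over all $j>J$; since the per-band allocation must hit zero at some finite level (the paper cuts off at $k\ge k_0(1+d/\alpha)$), you need a separate argument for the completely unsampled tail bands, namely the trivial Nikolskii bound $\|f_k\|_{L_p}\lesssim 2^{kd(1-1/p)}\|f_k\|_{L_1}\lesssim 2^{k(d(1-1/p)-s)}$, together with the choice $(d+\alpha)(1-1/p)<s$ to ensure the tail contributes only $O(2^{-sk_0})$; your sketch does not address this. Second, in the linear regime your reconstruction by the Fourier partial sum $S_N$ incurs Lebesgue-constant losses of order $\log N$ at the endpoints $p\in\{1,\infty\}$ (and $q\in\{1,\infty\}$); the fix, as in the paper, is to use a de la Vall\'ee Poussin (or similar smoothed) multiplier, which is uniformly bounded on all $L_p$ and kills the sampled low-frequency content exactly, giving the rate with no logarithm.
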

Let us remark on the choice of (nearly) optimal frequencies, i.e., optimal up to a constant factor, which attain the upper bound \eqref{main-upper-bound-equation}. In the first regime, where either $q \geq 2$ or $p \leq q$, a nearly optimal choice is given by taking the lowest block of frequencies, i.e., the set
\begin{equation}
    \{\xi_i\}_{i=1}^n = \{\xi\in \mathbb{Z}^d:~|\xi|_\infty \leq m\},
\end{equation}
where $m$ is chosen such that $n \geq (2m+1)^d$.

In the other two regimes, where $1\leq q < 2$ and $p > q$, nearly optimal frequency sets are more complicated, and can be constructed as follows. Let $\alpha > 0$ such that $(d+\alpha)(1-1/p) < s$ be a parameter depending upon $s,d$ and $p$. Choose an integer $k_0\geq 1$ so that $n \eqsim 2^{k_0}$. We first take all frequencies $\xi$ such that $|\xi|_\infty \leq 2^{k_0}$. In addition, from each of the following overlapping frequency blocks:
\begin{equation}
    B_k := \{\xi\in \mathbb{Z}^d:~2^{k-1} \leq |\xi|_\infty \leq 2^{k+1}\}~\text{for $k_0 \leq k \leq k\left(1+\frac{d}{\alpha}\right)$},
\end{equation}
we randomly sample $\lceil 2^{dk_0 - \alpha(k-k_0)}\rceil$ additional frequencies. The resulting hierarchically randomly sub-sampled set of frequencies will realize the bound \eqref{main-upper-bound-equation} with high probability.

Theorem \ref{main-theorem} shows that the Fourier sampling numbers satisfy the same asymptotics as the Gelfand numbers up to logarithmic factors. An interesting and quite difficult question is whether these logarithmic factors are sharp. This problem is closely related to the problem of determining the minimal number of Fourier measurements required to satisfy the restricted isometry property (see \cite{candes2006near,rudelson2008sparse}). In this direction, we are able to obtain the following lower bound.
\begin{theorem}\label{lower-bound-theorem}
    Suppose that $s/d \geq 1 - 1/p$ and $1\leq p\leq 2$. Then we have the lower bound
    \begin{equation}
        s^F_n(K^s_1)_{L_p} \geq cn^{-s/d}\log(n)^{\gamma_p},
    \end{equation}
    where the exponent $\gamma_p$ is given by
    \begin{equation}
        \gamma_p := \max\left\{0,\frac{\log\left(\frac{\pi}{2^{1+1/p}}\right)}{\log{2}}\right\}.
    \end{equation}
    Here the constant $c > 0$ depends only on $s,d$ and $p$, but not on $f$ or $n$.  
\end{theorem}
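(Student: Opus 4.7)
Plan: Given any $n$ sampled frequencies $\xi_1,\ldots,\xi_n\in\mathbb{Z}^d$, I would construct a witness $f\in K^s_1$ with $\hat f(\xi_i)=0$ for each $i$ and $\|f\|_{L_p}\gtrsim n^{-s/d}(\log n)^{\gamma_p}$. Since $\gamma_p=0$ for $p\le p_0\approx 1.535$, and the pure rate $n^{-s/d}$ is immediate from the Gelfand-width lower bound $d^n(K^s_1)_{L_p}\gtrsim n^{-s/d}$ recorded in \eqref{gelfand-widths-asymptotics} (any such Gelfand witness is automatically a Fourier-sampling witness), the content of the theorem lies entirely in producing the factor $(\log n)^{\gamma_p}$ for $p_0<p\le 2$. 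A tensorization argument reduces to the one-dimensional case: if $f_1$ is a 1D witness against $n^{1/d}$ samples in one coordinate with exponent $s/d$, then a tensor product $f_1(x_1)\cdots f_1(x_d)$ produces a $d$-dimensional witness against all $n$ samples, since by a pigeonhole one coordinate projection contains at most $n^{1/d}$ distinct samples.

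In one dimension, the witness is built from Dirichlet-kernel-type trigonometric polynomials, which have the favorable property $\|D_N\|_{L_p}/\|D_N\|_{L_1}\sim N^{1-1/p}/\log N$ for $p>1$. A single shifted Dirichlet kernel supported in a sample-free dyadic shell at scale $N\sim n$ already yields the rate $n^{-s/d}$ (after controlling $\|f\|_{B^s_\infty(L_1)}$ by Bernstein's inequality), matching the Gelfand width. To amplify this by the logarithmic factor $(\log n)^{\gamma_p}$, I expect the construction to iterate across $K\sim\log\log n$ scales via a Riesz-product-type scheme: multiplying the base polynomial by factors $(1+e^{i\eta_k x})$ with $\eta_k$ chosen lacunarily and steering clear of the sampled set. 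Under sufficient scale separation, each iteration multiplies $\|f\|_{L_p}$ by $2\|\cos\|_{L_p}$ and $\|f\|_{L_1}$ by $4/\pi$, and a Littlewood--Paley (rather than naive Bernstein) analysis of the Besov norm of the cumulative product should give a net per-step ratio improvement of $\pi/2^{1+1/p}$ at the critical smoothness $s=1-1/p$, so that after $K$ iterations the total gain is $(\pi/2^{1+1/p})^K=(\log n)^{\gamma_p}$.

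The main obstacle is the simultaneous combinatorial and analytic consistency of this multi-scale construction. At each step $k$, the modulation frequency $\eta_k$ must be chosen so that all $2^k$ subset-sum Fourier modes of the cumulative product $\prod_{j\le k}(1+e^{i\eta_j x})$ remain disjoint from $\{\xi_i\}$; since the set of forbidden $\eta_k$ has cardinality $\sim n\cdot 2^k$, a counting argument must show that valid choices persist throughout the $\log\log n$ iterations within the allowed frequency range dictated by the final Besov budget. In parallel, the lacunary spacings must be wide enough that scale separation validates the $L_p$ and $L_1$ norm factorizations at every level, and the Littlewood--Paley analysis must extract the precise per-step factor $\pi/2^{1+1/p}$ rather than the weaker factor $\pi\|\cos\|_{L_p}/2^{s+1}$ returned by the naive Bernstein bound. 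Reconciling the combinatorial avoidance, the norm factorization, and the sharp Besov estimate at each of the $\log\log n$ scales is the technical crux of the argument.
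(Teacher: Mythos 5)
You have correctly guessed the paper's mechanism---a Riesz-type product with roughly $\log_2\log_2 n$ factors, each costing a factor $4/\pi$ in $L_1$ while gaining $2^{1-1/p}$ in $L_p$, which yields $(\log n)^{\gamma_p}$---but the two steps your plan leans on would fail as stated. First, the reduction to one dimension is unsound: for $n$ arbitrary points of $\mathbb{Z}^d$ (e.g., $n$ points along a diagonal) every coordinate projection can have $n$ distinct values, so there is no coordinate with only $n^{1/d}$ projected samples. To force a tensor-product witness to avoid all samples you would then need the one-dimensional factor to avoid up to $n$ (not $n^{1/d}$) frequencies, which pushes its degree to $\sim n$ and degrades the rate from $n^{-s/d}$ to $n^{-s}$. (Relatedly, an adversarial sample set need not leave any sample-free dyadic shell at scale $\sim n$, and a genuine Dirichlet kernel cannot vanish at samples inside its shell; the baseline rate is fine only because it already follows from the Gelfand widths.) Second, and more centrally, the frequency-avoidance step is exactly where a counting argument breaks down: at step $k$ the cumulative product has $2^k$ modes, so demanding that all shifted modes miss the sample set forbids on the order of $n\cdot 2^k$ values of $\eta_k$, and with $2^k\sim\log n$ this exceeds the size of the admissible frequency range at the late iterations. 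The paper's Lemma \ref{bourgain-fundamental-lemma} (Bourgain's argument adapted to arbitrary abelian groups) exists precisely to overcome this: rather than global avoidance, one greedily picks $g_j$ maximizing $|\Lambda_j\cap g_j^{-1}\cdot\Lambda_j|$ and proves, via $|\Lambda_{j+1}|\ge|\Lambda_j|(|\Lambda_j|-3^j)/|G|$ and the induction $|\Lambda_j|\ge 2\cdot 3^j$, that the nested good sets survive for about $\log_2\log_2|G|$ steps. You name this as ``the technical crux'' but do not supply the idea that resolves it, so the heart of the proof is missing.

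The analytic estimates are also handled differently, and more robustly, in the paper, in a way that removes the remaining obstacles you list. The construction lives on the finite group $G=(\mathbb{Z}/2^{k+2}\mathbb{Z})^d$ with $2^k\eqsim n^{1/d}$; because the Riesz factors are characters of $G$, all subset-products remain inside the fixed block $|\xi|_\infty\le 2^{k+1}$ (frequencies add modulo $2^{k+2}$), so plain Bernstein gives $\|f\|_{B^s_\infty(L_1)}\lesssim 2^{ks}\lesssim n^{s/d}$ and no Littlewood--Paley refinement or lacunary bookkeeping is needed. In your continuous lacunary scheme the top frequency grows by a $\mathrm{polylog}(n)$ factor over the $\log\log n$ steps, and the resulting Bernstein cost $(\mathrm{polylog}\,n)^s$ can swamp the tiny gain $(\log n)^{\gamma_p}$ (recall $\gamma_2\approx 0.151$). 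For the $L_p$ lower bound no scale separation or factorization is used: the product $\rho$ is a sum of $2^m$ distinct characters with unimodular coefficients (property 3 of the lemma), so $\|\rho\|_{\ell_2}^2=2^m$, $\|\rho\|_{\ell_\infty}\le 2^m$, and interpolation gives $\|\rho\|_{\ell_p}\ge 2^{m(1-1/p)}$. For the $L_1$ upper bound, where independence genuinely fails for complex characters, Proposition \ref{riesz-product-of-characters-proposition} inserts random phases $z_j$ and computes that the average of $\|\chi_h\prod_j(1+z_j\chi_j)\|_{\ell_1}$ over the phases is exactly $(4/\pi)^m$ (the phase integrals decouple the group sum), then fixes a good phase vector; Marcinkiewicz--Zygmund finally transfers the discrete $\ell_1$/$\ell_p$ comparison to $L_1(\mathbb{T}^d)$/$L_p(\mathbb{T}^d)$. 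If you want to rescue your outline, the shortest route is to drop the 1D tensorization and the lacunary/scale-separation framework and work directly with characters of a finite group, which is what makes both the combinatorics and the analysis close.
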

Theorem \ref{lower-bound-theorem} is only informative when $\gamma_p > 0$, otherwise it reduces to the lower bound on the Gelfand widths implicit in \eqref{gelfand-widths-asymptotics}. This occurs for $p > 1/(\log(\pi)/\log(2) - 1) \approx 1.535$. For $p = 2$, the exponent $\gamma_2 \approx 0.151$. Thus, there is still a significant logarithmic gap with the upper bound \eqref{main-upper-bound-equation}. Nonetheless, Theorem \ref{lower-bound-theorem} shows that there is a non-asympotic gap between the Gelfand widths and the Fourier sampling numbers, i.e., that restricting to Fourier measurements results in a recovery error which is asymptotically worse than the error when allowing general linear measurements.

Finally, let us remark on the connection between our results and classical results in compressive sensing \cite{candes2006robust,donoho2006compressed}. The traditional theory of compressive sensing focuses on the recovery of sparse discrete vectors, i.e., vectors $x\in \mathbb{R}^N$ with far fewer than $N$ non-zero entries. For example, the seminal work \cite{candes2006robust} considered recovering discrete vectors from randomly subsampled discrete Fourier measurements. In contrast, the problem we consider concerns the recovery of \textit{continuous} function from \textit{continuous} measurements, which we believe more closely models practical applications of compressive sensing such as MRI and CT. Of course, we make heavy use of the tools of compressive sensing, especially bounds on the restricted isometry property of Fourier matrices \cite{candes2006near,rudelson2008sparse,guedon2008majorizing}. Our main contribution is to perform the correct \textit{numerical analysis} to use these results to solve the continuous Fourier sampling problem. Note also that the sampling strategy for the continuous problem, namely the hierarchical sub-sampling described after Theorem \ref{main-theorem}, is completely different than the sampling strategy for the sparse discrete recovery problems, where a uniform random sub-sampling is used.

Since we consider a continuous sampling problem, and do not make the typical sparsity assumptions on the ground truth, our results are necessarily formulating differently than the typical results in compressive sensing. Specifically, we show that functions with smoothness measured in $L_q$ can be recovered with error measured in $L_p$ at a certain rate. The case of particular interest is when $1 \leq q < p \leq 2$. In this case, we show that the recovery error can be measured in the stronger $L_p$-norm instead of the $L_q$-norm with only a logarithmic deterioration in the rate. As is well-known in non-linear approximation, the practical effect of this is a \textit{sharper recovery of edges}. We describe this phenomenon, and provide numerical experiments demonstrating this effect in Section \ref{numerical-experiments-section}. 

Compressive sensing has been successfully applied to MRI and MR spectroscopy \cite{larson2011fast,lustig2007sparse,lustig2008compressed,studer2012compressive,guerquin2011fast}, and various theories which describe this application have been given \cite{adcock2017breaking,donoho2006compressed,tsaig2006extensions,romberg2008imaging,candes2007sparsity,adcock2016generalized}. Empirically and theoretically it has been observed that sampling strategies similar to the hierarchical sub-sampling described after Theorem \ref{main-theorem} perform well \cite{adcock2017breaking,wang2014novel,guerquin2011fast,lustig2008compressed}. However, none of the existing work is strong enough to address the problem of determining the Fourier sampling numbers which we consider here. Specifically, existing analyses either make different and much stronger assumptions on the target function, such as exact wavelet sparsity or near-sparsity in $\ell_1$ \cite{adcock2017breaking,romberg2008imaging,adcock2016generalized}, or they assume that the signal is already separated into scales which are sparse in wavelet space, and which can be sampled from and recovered independently \cite{donoho2006compressed,tsaig2006extensions,candes2007sparsity}. We also remark that our theory provides a uniform recovery guarantee, i.e., a single set of measurements will work well for all target functions in a Besov space.

The paper is organized as follows. In Section \ref{upper-bounds-section} we prove the upper bound Theorem \ref{main-theorem}. In Section \ref{lower-bounds-section} we prove the lower bound Theorem \ref{lower-bound-theorem}. In Section \ref{numerical-experiments-section} we describe a general recovery algorithm for linear measurements, discuss the practical implications of our analysis, and give numerical experiments demonstrating the recovery of functions of bounded variation from hierarchically randomly sub-sampled Fourier coefficients. Finally, in Section \ref{conclusion-section} we give some concluding remarks.

\section{Upper bounds}\label{upper-bounds-section}
In this section, we prove the upper bound \eqref{main-upper-bound-equation} in Theorem \ref{main-theorem}. Two important tools in our analysis will be the properties of the de la Vall\'ee Poussin kernel and the characterization of Besov spaces in terms of the error of approximation by trigonometric polynomials. Let us begin by summarizing the main facts we will need. 

We let
\begin{equation}
    \mathcal{T}_m^d := \left\{\sum_{\substack{\xi\in \mathbb{Z}^d\\|\xi|_\infty \leq m}}a_\xi e^{i\xi\cdot x}:~a_\xi\in \mathbb{C}\right\}
\end{equation}
denote the set of trigonometric polynomials of coordinate-wise degree at most $m$, and write
\begin{equation}\label{polynomial-error-approximation-bound}
    E_m(f)_p := \inf_{p\in \mathcal{T}_{m}^d} \|f - p\|_{L_p}
\end{equation}
for the error of approximation by trigonometric polynomials of degree less than $m$ with respect to the $L_p$-norm. The optimal trigonometric polynomial in \eqref{polynomial-error-approximation-bound} is typically not a linear function of $f$ (unless $p = 2$ when it is the partial sum of the Fourier series of $f$). Nevertheless, there are linear functions of $f$ which are near optimal in a certain sense. For example, suppose that $m$ is even and consider the (tensor product) de la Vall\'ee Poussin sums of $f$, defined via a Fourier multiplier by
\begin{equation}\label{de-la-vallee-poisson-definition-equation}
    V_mf(x) := \sum_{\xi\in \mathbb{Z}^d}\widehat{V}_m(\xi)\hat{f}(\xi) e^{i\xi\cdot x},
\end{equation}
where $\widehat{V}_m(\xi) := \prod_{j=1}^d \nu_m(\xi_j)$ and the one-dimensional multipliers $\nu_m(k)$ are given by
\begin{equation}
    \nu_m(k) := \begin{cases}
        1 & |k| \leq m/2\\
        2\left(1-\frac{|k|}{m+1}\right) & m/2 < |k| \leq m\\
        0 & |k| > m.
    \end{cases}
\end{equation}
Note that $\widehat{V}_m(\xi) = 0$ if $|\xi|_\infty > m$, and $\widehat{V}_m(\xi) = 1$ if $|\xi|_\infty \leq m/2$. From this it follows that that $V_mf\in \mathcal{T}_{m}^d$ for all $f$ and that $V_mf = f$ for any $f\in \mathcal{T}^d_{m/2}$. 

It is also well-known that the kernel $V_m$ corresponding to the Fourier multiplier $\widehat{V}_m$ is bounded in $L_1$ (uniformly in $m$), and thus the operators $V_m$ are uniformly operator on $L_p$ for $1\leq p\leq \infty$ (in fact on all function spaces with a translation invariant norm). Thus, the Lebesgue lemma implies that
\begin{equation}\label{de-la-valee-bound-1-equation}
    \|f - V_mf\|_{L_p} \leq C\inf_{p\in\mathcal{T}_{m/2}^d} \|f - p\|_{L_p} \leq CE_{m/2}(f)_p,
\end{equation}
where the constant $C$ depends only on $d$ and not on $m$ or $f$.

Next, we will need a characterization of the Besov semi-norms defined in \eqref{besov-semi-nor-definition} in terms of the error of approximation by trigonometric polynomials. Specifically, it is well-known that the Besov semi-norm is equivalent to (see for instance \cite{devore1993constructive,dung2018hyperbolic})
\begin{equation}\label{besov-space-characterization}
    |f|_{B^s_\infty(L_q)} \eqsim \sup_{m\geq 0} (m+1)^sE_{m}(f)_q \eqsim \sup_{k \geq 0}2^{ks}E_{2^k-1}(f)_q.
\end{equation}

\subsection{The linear regime: $p\leq q$ or $q \geq 2$}
Let us return to the proof of the upper bound in Theorem \ref{main-theorem}. We begin by considering the first case where $p\leq q$ or $q \geq 2$. Note that it suffices to consider the case where $n = (2m+1)^d$ for an even integer $m \geq 2$. Consider sampling Fourier coefficients on a uniform $\{-m,...,m\}^d$ grid, i.e.,
\begin{equation}
    \{\xi_i\}_{i=1}^n = \{-m,-(m-1),...,(m-1),m\}^d.
\end{equation}
Suppose that $f\in K^s_q$, i.e., $\|f\|_{B^s_\infty(L_q)} \leq 1$, and $\hat{f}(\xi_i) = 0$ for $i=1,...,n$. It follows from this and the fact that $\widehat{V}_m(\xi) = 0$ for $|\xi|_\infty > m$ that $V_m f = 0$, and thus by \eqref{de-la-valee-bound-1-equation} and \eqref{besov-space-characterization} we get
\begin{equation}
    \|f\|_{L_q} = \|f - V_mf\|_{L_q} \leq CE_{m/2}(f)_p \leq C(m/2 + 1)^{-s} \leq Cn^{-s/d}.
\end{equation}
This proves the desired bound when $p = q$, and thus also when $p < q$ by H\"older's inequality. To handle the case where $p > q$, we use the standard Besov space embedding
\begin{equation}
    \|f\|_{B^{s-\frac{d}{q} + \frac{d}{p}}_\infty(L_p)} \leq C\|f\|_{B^s_\infty(L_q)},
\end{equation}
which is valid for $p > q$ and $s - d/q+d/p > 0$. This gives the bound
\begin{equation}
    s^F_n(K^s_q)_{L_p} \leq Cn^{-s/d + (1/q - 1/p)_+},
\end{equation}
which is sharp when $p\leq q$ or $q \geq 2$. This expresses the fact that sampling the lowest frequencies is optimal in the linear regime $p\leq q$ or when $q \geq 2$.

\subsection{The non-linear regime $1\leq q < 2$ and $p > q$}
The case $1\leq q < 2$ and $p > q$ is more complicated and sampling the lowest frequencies is no longer optimal. To handle this case, we first observe that it suffices to consider the case $q = 1$. This is because for $q > 1$, H\"older's inequality implies that $\|f\|_{B^s_\infty(L_1)} \leq \|f\|_{B^s_\infty(L_q)}$, and since by assumption $s > 1-1/p$ it follows that $K^s_1$ is also a compact subset of $L_p$. Hence, proving \eqref{main-upper-bound-equation} for $q = 1$ also implies the same bound for $q > 1$.

Next, we use the  de la Vall\'ee Poussin kernel to construct the following multiscale decomposition of $f$. We define $f_0 := V_2f$ and for $k \geq 1$ we set $f_k := V_{2^{k+1}}f - V_{2^k}f$. We first observe that
\begin{equation}\label{decomposition-of-f-equation}
    \sum_{k=0}^\infty f_k = \lim_{r\rightarrow  \infty} \sum_{k=0}^{r-1} f_k = \lim_{r\rightarrow  \infty} V_{2^r}f = f
\end{equation}
with convergence in $L_p$, by \eqref{de-la-valee-bound-1-equation} and the density of trigonometric polynomials. Moreover, we verify the following properties of the $f_k$:
\begin{enumerate}
    \item $\hat{f}_k(\xi) = 0$ if $|\xi|_\infty < \lfloor2^{k-1}\rfloor$ or $|\xi|_\infty > 2^{k+1}$.
    \item $\hat{f}_k(\xi) = 0$ if $\hat{f}(\xi) = 0$ for all $\xi$.
    \item $\|f_k\|_{L_1} \leq C2^{-ks}$ for a constant $C$ independent of $k$ and $f\in K^s_1$.
\end{enumerate}
The first two properties above immediately follow by considering the Fourier multiplier corresponding to $V_{2^{k+1}}f - V_{2^k}f$, which vanishes outside of $\lfloor2^{k-1}\rfloor \leq |\xi|_\infty\leq 2^{k+1}$. The third property follows using \eqref{de-la-valee-bound-1-equation} and \eqref{besov-space-characterization} since $f\in K^s_1$ and thus
\begin{equation}\label{f-k-L1-bound-equation}
    \|f_k\|_{L_1} \leq \|f - V_{2^{k+1}}f\|_{L_1} + \|f - V_{2^k}f\|_{L_1} \leq CE_{2^{k-1}}(f)_1 \leq C2^{-ks}.
\end{equation}
At this point, let us give an overview of the remainder of the proof. Suppose that frequencies $\xi_1,...,\xi_n$ have been chosen, and $f\in K^s_1$ satisfies $\hat{f}(\xi_i) = 0$. The first two properties above then imply that $f_k$ is supported only on frequencies in the band
\begin{equation}
    B_k := \{\xi:~\lfloor2^{k-1}\rfloor \leq |\xi|_\infty\leq 2^{k+1}\},
\end{equation}
and also that $f_k$ vanishes at all of the $\xi_i$ which lie within this band, i.e., $\hat{f}_k(\xi_i) = 0$. Moreover, $f_k$ satisfies the $L_1$-bound given in the third property above. 

Our strategy will be to choose a set of frequencies $S_k\subset B_k$ of size $|S_k| = n_k$ for each frequency band $B_k$. The sizes $n_k$ are a parameter which we will optimize later. We then sample at the union of the sets $S_k$, i.e.,
\begin{equation}\label{total-collection-of-frequencies-equation}
    \{\xi_i\}_{i=1}^n = \bigcup_{k=0}^\infty S_k.
\end{equation}
The total number of frequencies chosen is bounded by
\begin{equation}\label{total-number-of-measurements-equation}
    n \leq \sum_{k=0}^\infty n_k. 
\end{equation}
Note that since the frequency bands $B_k$ overlap, it is possible for strict inequality to hold in \eqref{total-number-of-measurements-equation}, but this can only reduce the total number of measurements. By construction, it then follows that if $f\in K^s_1$ and $\hat{f}(\xi_i) = 0$ for $i=1,...,n$, then 
\begin{itemize}
    \item $f_k$ is supported on the frequency band $B_k$ and $\hat{f}_k(\xi) = 0$ for all $\xi\in S_k$, i.e., $f_k$ is supported on complement $B_k\backslash S_k$.
    \item $\|f_k\|_{L_1} \leq C2^{-ks}$.
\end{itemize} 

The frequencies $S_k$ will be chosen so that on the span of the complement $B_k\backslash S_k$ the $L_p$-norm is bounded by a small multiple of the $L_1$-norm, i.e., we want
\begin{equation}
    \|h\|_{L_p} \leq C(S_k)\|h\|_{L_1}
\end{equation}
for any 
\begin{equation}
h(x) = \sum_{\xi\in B_k\backslash S_k}a_\xi e^{i\xi\cdot x}
\end{equation}
with $C(S_k)$ as small as possible. 

This is where the techniques of compressive sensing are used. Specifically, this problem is closely related to restricted isometry property \cite{candes2005decoding} of Fourier matrices, which has been studied in \cite{candes2006near,rudelson2008sparse,cheraghchi2013restricted,haviv2017restricted}. The results in any of these papers would suffices to prove the upper bound \eqref{main-upper-bound-equation} with additional logarithmic factors. However, to obtain the sharpest possible bound we will use the following result from \cite{guedon2008majorizing}. We remark that this result is not strictly stronger than the results in \cite{rudelson2008sparse,cheraghchi2013restricted,haviv2017restricted}, but it is stronger in the regime which dominates the error in our problem.

\begin{theorem}[Theorem 3 in \cite{guedon2008majorizing}]\label{guedon-theorem}
    There exists an absolute constant $C$ with the following property. Let $\phi_1,...,\phi_N$ be a bounded orthonormal system, i.e., the $\{\phi_j\}_{j=1}^N$ are orthonormal in $L_2$ and $\|\phi_j\|_{L_\infty} \leq 1$ for $j=1,...,N$. Then for any $1 < k < N$ there exists a subset $S \subset \{1,...,N\}$ such that $|S| \leq n$ and for any scalars $a_j\in \mathbb{C}$ we have
    \begin{equation}\label{guedon-bound-equation}
        \left\|\sum_{j\in S^c} a_j\phi_j\right\|_{L_2}\leq C\mu(\log(\mu))^{5/2}\left\|\sum_{j\in S^c} a_j\phi_j\right\|_{L_1},
    \end{equation}
    where $\mu = \sqrt{N/n}\sqrt{\log{n}}$.
\end{theorem}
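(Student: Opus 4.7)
The plan is to construct $S$ by a probabilistic argument, using independent Bernoulli selectors and controlling the resulting empirical process with Talagrand's majorizing measure machinery. Introduce i.i.d.\ Bernoulli random variables $\delta_1,\dots,\delta_N$ with $\mathbb{E}\delta_j = c\,n/N$ and set $S := \{j : \delta_j = 1\}$. A Bernstein-type concentration gives $|S| \leq n$ with probability at least $3/4$. For a coefficient vector $a \in \mathbb{C}^N$ with $\operatorname{supp}(a) \subset S^c$, orthonormality of the system yields $\|\sum a_j \phi_j\|_{L_2} = \|a\|_{\ell_2}$, so \eqref{guedon-bound-equation} becomes the statement that
\begin{equation*}
\inf_{\substack{a \in S^{N-1}_{\ell_2}\\ \operatorname{supp}(a)\subset S^c}} \left\|\sum_{j} a_j \phi_j\right\|_{L_1} \;\geq\; \frac{1}{C\mu (\log \mu)^{5/2}}.
\end{equation*}

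The main step is to reduce this to controlling a Gaussian (or sub-Gaussian) process indexed by the $\ell_2$-unit ball. Using the elementary inequality $\|f\|_{L_2}^2 \leq \|f\|_{L_1}\,\|f\|_{L_\infty}$ together with a symmetrization/decoupling step that replaces $\delta_j$ by independent Rademacher variables $\varepsilon_j$, one is led to bound the expected supremum
\begin{equation*}
\mathbb{E} \sup_{a \in B_{\ell_2}^N} \left|\int \sum_j \varepsilon_j \delta_j\, a_j \phi_j(x)\, \sigma_a(x)\, dx\right|,
\end{equation*}
where $\sigma_a(x)$ is a sign function depending only on $a$. This process is sub-Gaussian in $a$ with respect to a natural metric $d(a,b)$ determined by the selectors and the kernel; by Talagrand's majorizing measure theorem the expected supremum is bounded by $\gamma_2(B_{\ell_2}^N,d)$.

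Estimating $\gamma_2(B_{\ell_2}^N,d)$ is the core technical content. One interpolates between the trivial estimate $\|\phi_j\|_{L_\infty}\leq 1$ and the $L_2$-normalization to control the covering numbers of $B_{\ell_2}^N$ at each dyadic scale, then applies a generic chaining (rather than a crude Dudley entropy integral) to sum the contributions. The effective dimension of the restricted system is $n$, while the ambient dimension is $N$; balancing these produces the factor $\mu = \sqrt{N/n}\sqrt{\log n}$. The sharper logarithmic exponent $5/2$ emerges from the careful iteration of chaining across the multiple scales present in the bounded orthonormal system, rather than from a one-shot Dudley bound which would yield an additional power of $\log$. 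Finally, combining the expected bound with a concentration inequality for the supremum (Talagrand's inequality for empirical processes) and a union bound over the event $\{|S| \leq n\}$ shows the desired $S$ exists with positive probability.

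The main obstacle is precisely this sharp chaining estimate for $\gamma_2(B_{\ell_2}^N,d)$: the naive Dudley integral loses a factor of $\sqrt{\log N}$ and must be replaced by a genuine majorizing-measure construction adapted to the two-scale structure (random selection of size $n$ inside an ambient system of size $N$). The interpolation between the $L_\infty$-boundedness and the orthonormality, and the matching of dyadic entropy levels to the Bernoulli selection probability, is what pins down the exponent $5/2$ and cannot be circumvented by softer tools.
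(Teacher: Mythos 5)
The statement you are proving is not established in the paper at all: it is imported verbatim as Theorem~3 of \cite{guedon2008majorizing}, so the ``paper's proof'' is a citation, and a self-contained argument would have to reproduce the content of that reference. Your proposal correctly identifies the strategy used there (random Bernoulli selection of $S$, symmetrization, and a generic-chaining/majorizing-measure bound for the resulting empirical process), so as a road map it points in the right direction. But as a proof it has a genuine gap, and it is the one you yourself flag: the estimate of $\gamma_2(B_{\ell_2}^N,d)$ that is supposed to produce the factor $\mu(\log\mu)^{5/2}$, with $\mu=\sqrt{N/n}\sqrt{\log n}$, is precisely the entire technical content of Gu\'edon--Mendelson--Pajor--Tomczak-Jaegermann's argument (building on Talagrand's work on selecting characters), and nothing in the proposal actually carries it out -- neither the metric $d$, nor the covering-number estimates at each scale, nor the bookkeeping that yields the exponent $5/2$ rather than a worse power of the logarithm. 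Asserting that ``careful iteration of chaining'' gives the bound is a statement of the theorem's difficulty, not a proof of it.

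Two intermediate steps are also shakier than presented. First, the process $\sup_a \bigl|\int \sum_j \varepsilon_j\delta_j a_j\phi_j(x)\,\sigma_a(x)\,dx\bigr|$ has an index-dependent sign pattern $\sigma_a$; it is not sub-Gaussian with respect to any obvious metric, so Talagrand's majorizing measure theorem cannot be invoked directly -- removing this dependence (by duality, decoupling, or by working with a different functional altogether) is one of the places where the cited proof does real work. Second, the interpolation $\|f\|_{L_2}^2\leq\|f\|_{L_1}\|f\|_{L_\infty}$ cannot by itself close the argument: for $f=\sum_{j\in S^c}a_j\phi_j$ with $\|a\|_{\ell_2}=1$ one only knows $\|f\|_{L_\infty}\leq\sqrt{N-n}$, which is far larger than $\mu(\log\mu)^{5/2}$, so a uniform $L_\infty$ (or ``peaky-part'') control on the complement span is exactly what must be extracted from the random selection, not assumed. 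In short, the proposal is a correct high-level description of the known approach, but the decisive estimates are missing, so it does not constitute a proof of the theorem as stated.
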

In Theorem \ref{guedon-theorem} the $L_p$ norms can be taken on any probability space, in particular on $\mathbb{T}^d$. We remark that the restriction $\|\phi_j\|_{L_\infty} \leq 1$ can be relaxed to $\|\phi_j\|_{L_\infty} \leq L$ at the expense of increasing $\mu$ proportionally. However, we will only apply this result to subsets of the Fourier system on the torus, and so will not need this additional generality. We remark also that the index set $S$ in Theorem \ref{guedon-theorem} can be uniformly randomly sub-sampled, and the resulting set will satisfy the bound \eqref{guedon-bound-equation} with high probability (see Theorem 3.10 in \cite{rudelson2008sparse} and Theorem 2 in \cite{guedon2008majorizing}). However, to bound the Fourier sampling numbers the existence of the set $S$ is sufficient.

We will utilize Theorem \ref{guedon-theorem} in the following way. Each frequency band $B_k$ is a bounded orthonormal system of size $|B_k| \leq 2^{d(k+1)}$. Supposing that $1 < n_k < |B_k|$, we may apply Theorem \ref{guedon-theorem}, we see that there exists a subset $S_k\subset B_k$ with $|S_k| \leq n_k$ such that for any 
\begin{equation}
h(x) = \sum_{\xi\in B_k\backslash S_k}a_\xi e^{i\xi\cdot x}
\end{equation}
we have
\begin{equation}
    \|h\|_{L_2} \leq C\mu_k(\log(\mu_k))^{5/2}\|h\|_{L_1}
\end{equation}
where $\mu_k = \sqrt{2^{d(k+1)}/n_k}\sqrt{\log{n_k}}$. In particular, since $f_k$ is supported on $B_k\backslash S_k$ it follows from the $L_1$-bound \eqref{f-k-L1-bound-equation} that
\begin{equation}
    \|f_k\|_{L_2} \leq C\mu_k(\log(\mu_k))^{5/2}\|f_k\|_{L_1} \leq C2^{-ks}\mu_k(\log(\mu_k))^{5/2}.
\end{equation}

To move from the $L_2$-norm error to general $L_p$-norm error, we observe that if $1 < p < 2$ a standard interpolation argument implies that
\begin{equation}\label{Lp-bound-less-2}
    \|f_k\|_{L_p} \leq \|f_k\|_{L_1}^{2/p - 1}\|f_k\|_{L_2}^{2 - 2/p} \leq C\left[\mu_k(\log(\mu_k))^{5/2}\right]^{2-2/p} \|f_k\|_{L_1} \leq C2^{-ks}\left[\mu_k(\log(\mu_k))^{5/2}\right]^{2-2/p},
\end{equation}
while if $p > 2$ the Nikolskii inequality \cite{nikol1951inequalities} implies that
\begin{equation}\label{Lp-bound-gtr-2}
    \|f_k\|_{L_p} \leq C2^{d(k+1)(1/2-1/p)}\|f_k\|_{L_2} \leq C2^{dk(1/2-1/p)}2^{-ks}\left[\mu_k(\log(\mu_k))^{5/2}\right],
\end{equation}
since $f_k$ is a trigonometric polynomial of degree at most $2^{k+1}$.

On the other hand, if $n_k = |B_k|$, i.e., all frequencies are in the set $S_k$, then obviously $f_k = 0$. Finally, if $n_k = 0$ we again use the Nikolskii inequality \cite{nikol1951inequalities} to get
\begin{equation}\label{tail-bound-nikolskii-equation}
    \|f_k\|_{L_p} \leq 2^{d(k+1)(1-1/p)}\|f\|_{L_1} \leq C2^{k(d(1-1/p)-s)}.
\end{equation}
Finally, we complete our analysis by optimizing the number of frequencies $n_k$ to select in each band. Fix an $\alpha > 0$ such that $(d+\alpha)(1-1/p) < s$ (this is possible since by assumption $s/d > 1-1/p$) and let $k_0 \geq 2$ be a parameter. We set
\begin{equation}
    n_k = \begin{cases}
        |B_k| & k \leq k_0\\
        \lceil2^{dk_0-\alpha(k-k_0)}\rceil & k_0 < k < k_0(1 + \frac{d}{\alpha})\\
        0 & k \geq k_0(1 + \frac{d}{\alpha}).
    \end{cases}
\end{equation}
Observe that in the middle case, $1 < n_k = \lceil2^{dk_0-\alpha(k-k_0)}\rceil \leq 2^{dk_0-\alpha(k-k_0) + 1}$, since the exponent $dk_0-\alpha(k-k_0)$ is greater than $0$ in this range. Using this, we bound $\mu_k$ in this range by
\begin{equation}\label{bound-on-mu-k}
    \mu_k \leq C2^{\frac{d+\alpha}{2}(k - k_0)}\sqrt{k_0 - \frac{\alpha}{d}(k-k_0)} \leq C2^{\frac{d+\alpha}{2}(k - k_0)}\sqrt{k_0},
\end{equation}
for a constant $C$ depending only on $d$ and $\alpha$.

We now estimate both the total number of frequencies selected and the maximum $L_p$-norm of any $f\in K^s_1$ which vanishes at the given frequencies as follows. As noted in \eqref{total-number-of-measurements-equation} the total number of frequencies is bounded by
\begin{equation}
    n\leq \sum_{k=0}^\infty n_k \leq 2^{d(k_0 + 1)} + 2^{dk_0+1}\sum_{k\geq k_0} 2^{-\alpha(k-k_0)} \leq C2^{dk_0}
\end{equation}
since $\alpha > 0$. 

Next, let $f\in K^s_1$ and suppose that $\hat{f}(\xi_i) = 0$ for the frequencies given in \eqref{total-collection-of-frequencies-equation}. Suppose first that $1 < p \leq 2$. Using \eqref{decomposition-of-f-equation}, \eqref{Lp-bound-less-2}, \eqref{bound-on-mu-k}, and \eqref{tail-bound-nikolskii-equation} we estimate
\begin{equation}
\begin{split}
    \|f\|_{L_p} &\leq \sum_{k=0}^\infty \|f_k\|_{L_p} \\
    &\leq C\left(2^{-sk_0}k_0^{1-\frac{1}{p}}\sum_{k_0 < k < k_0(1 + \frac{d}{\alpha})}2^{\left((d+\alpha)(1-\frac{1}{p})-s\right)(k-k_0)}\left(k-k_0 + \log(k_0)\right)^{5(1-\frac{1}{p})} + \sum_{k \geq k_0(1 + \frac{d}{\alpha})} 2^{k\left(d(1-\frac{1}{p})-s\right)}\right),
\end{split}
\end{equation}
since for $k \leq k_0$ all frequencies in $B_k$ and so $f_k = 0$ for such $k$. Since $s/d > 1-1/p$ and recalling the choice of $\alpha$, we have
\begin{equation}
    (d+\alpha)\left(1-\frac{1}{p}\right)-s < 0~\text{and}~d\left(1-\frac{1}{p}\right)-s < 0.
\end{equation}
This implies that the sums above converge and we obtain the bound
\begin{equation}
    \|f\|_{L_p} \leq C\left(2^{-sk_0}k_0^{1-\frac{1}{p}}(1+\log(k_0))^{5(1-\frac{1}{p})} + 2^{k_0(1+\frac{d}{\alpha})\left(d\left(1-\frac{1}{p}\right)-s\right)}\right).
\end{equation}
Finally, we observe that since $(d+\alpha)(1-1/p) < s$, it follows that
\begin{equation}\label{exponent-bound-derivation-equation}
    \left(1+\frac{d}{\alpha}\right)\left(d\left(1-\frac{1}{p}\right)-s\right) = \frac{d}{\alpha}(\alpha + d)\left(1-\frac{1}{p}\right) - \left(1+\frac{d}{\alpha}\right)s < -s.
\end{equation}
Thus, we obtain the bound (recalling that $k_0 \geq 2$)
\begin{equation}
    \|f\|_{L_p} \leq C2^{-sk_0}k_0^{1-\frac{1}{p}}\log(k_0)^{5(1-\frac{1}{p})}.
\end{equation}

On the other hand, when $p > 2$, we use \eqref{Lp-bound-gtr-2} instead of \eqref{Lp-bound-less-2} to estimate
\begin{equation}
\begin{split}
    \|f\|_{L_p} &\leq \sum_{k=0}^\infty \|f_k\|_{L_p} \\
    &\leq C\left(2^{-k_0\left(s - d(\frac{1}{2}-\frac{1}{p})\right)}\sqrt{k_0}\sum_{k_0 < k < k_0(1 + \frac{d}{\alpha})}2^{\left(\frac{d+\alpha}{2}-s+d(\frac{1}{2}-\frac{1}{p})\right)(k-k_0)}\left(k-k_0 + \log(k_0)\right)^{\frac{5}{2}} + \sum_{k \geq k_0(1 + \frac{d}{\alpha})} 2^{k\left(d(1-\frac{1}{p})-s\right)}\right),
\end{split}
\end{equation}
Since $s/d > 1-1/p$ it follows as before using \eqref{exponent-bound-derivation-equation} that the second sum above is bounded by $C2^{-sk_0}$. For this first sum, we observe that 
\begin{equation}
    \frac{d+\alpha}{2}-s+d\left(\frac{1}{2}-\frac{1}{p}\right) = (d+\alpha)\left(1 - \frac{1}{p}\right) - s - \alpha\left(\frac{1}{2} - \frac{1}{p}\right) < 0,
\end{equation}
since $(d+\alpha)(1-1/p) < s$ and $\alpha > 0$. This means that the first sum again converges and we get the bound
\begin{equation}
    \|f\|_{L_p} \leq C2^{-k_0\left(s - d(\frac{1}{2}-\frac{1}{p})\right)}\sqrt{k_0}\log(k_0)^{\frac{5}{2}} + C2^{-k_0} \leq C2^{-k_0\left(s - d(\frac{1}{2}-\frac{1}{p})\right)}\sqrt{k_0}\log(k_0)^{\frac{5}{2}}.
\end{equation}

Since $n\leq C2^{dk_0}$, this proves \eqref{main-upper-bound-equation} for all $n$ of the form $C2^{dk_0}$ with $k_0 \geq 2$, and thus by monotonicity for all $n \geq 4$ (adjusting the constant appropriately).

\section{Lower bounds}\label{lower-bounds-section}
In this section, we prove Theorem \ref{lower-bound-theorem}. Before presenting the proof, let us give a bit of context and related results.

The proof of Theorem \ref{lower-bound-theorem} is closely related to lower bounds on the size of Fourier matrices satisfying the restricted isometry property \cite{candes2005decoding,candes2006near,rudelson2008sparse}, and lower bounds on the problem considered in \cite{guedon2008majorizing,talagrand1998selecting}. Specifically, consider the measure space $X = \mathbb{C}^N$ with the empirical probability measure, i.e., for $x\in X$ we define
\begin{equation}
    \|x\|_{\ell_p} := \left(\frac{1}{N} \sum_{j=1}^n |x_j|^p\right)^{1/p}.
\end{equation}
For a given $1\leq n \leq N$, we wish to find a subspace $V_n\subset X$ of co-dimension $n$ on which the $\ell_1$ and $\ell_2$-norms are as comparable as possible, i.e., such that
\begin{equation}\label{l1-l2-comparison-equation}
    \|x\|_{\ell_1} \leq \|x\|_{\ell_2} \leq C\|x\|_{\ell_1}
\end{equation}
for all $x\in V_n$, with the constant $C:=C(V_n)$ as small as possible. We denote the best possible comparison that can be obtained for a subspace of co-dimension $n$ by
\begin{equation}\label{discrete-gelfand-width-definition-equation}
    C(n) := \inf_{V_n}~C(V_n).
\end{equation} 

Of course, this is simply a reformulation of the problem of estimating the Gelfand widths of the $\ell_1$ unit ball with respect to $\ell_2$ in $\mathbb{C}^N$. The connection with the restricted isometry property arises since if $V_n$ is taken to be the kernel of an $n\times N$ matrix $A$ which satisfies
\begin{equation}
    (1-\delta)\|x\|_2 \leq \|Ax\|_{2}\leq (1+\delta)\|x\|_2
\end{equation}
for all $s$-sparse vectors $x$, i.e., $x$ has at most $s$ non-zero entries, and $\delta = 1/4$, then $V_n$ satisfies the estimate \eqref{l1-l2-comparison-equation} with constant (see \cite{cohen2009compressed} or \cite{lorentz1996constructive}, Chapter 14)
$$C\lesssim \sqrt{N/s}.$$ 
Thus, constructing matrices satisfying the restricted isometry property \cite{candes2005decoding} gives a method for upper bounding the Gelfand widths of the $\ell_1$ unit ball with respect to $\ell_2$. This was essentially the same technique used by Garnaev and Gluskin \cite{garnaev1984widths} who obtained the tight estimate (see also \cite{lorentz1996constructive}, Chapter 14)
\begin{equation}\label{garnaev-gluskin-lower-bound}
    C(n) \eqsim \sqrt{\frac{N}{n}\left(1 + \log\frac{N}{n}\right)}.
\end{equation}

As in our investigation of `restricted' versions of the Gelfand widths, it is also of interest to restrict the allowed subspaces $V_n$ on which we seek an estimate of the form \eqref{l1-l2-comparison-equation}. One of the most important problems of this type arises when we restrict $V_n$ to be spanned by the characters of an abelian group. 

Specifically, let $(G,\cdot)$ be an abelian group of size $N$ and identify $\mathbb{C}^N$ with the space of complex-valued functions on $G$. The characters of $G$ are group homomorphisms from $G$ to $\mathbb{C}^*$ (the multiplicative group of non-zero complex numbers), i.e., they are functions $\chi:G\rightarrow \mathbb{C}\backslash \{0\}$ which satisfy
\begin{equation}
    \chi(g_1 \cdot g_2) = \chi(g_1)\chi(g_2).
\end{equation}
The characters form an abelian group under the action of point-wise multiplication, which we denote by $G^*$.
The identity element of $G^*$ is the function which is identically equal to $1$, which we denote by $\chi_0$.
The following properties of $G^*$ which we will use are well-known:
\begin{itemize}
    \item $|\chi(g)| = 1$ for all $\chi\in G^*$ and $g\in G$.
    \item The inverse of $\chi\in G^*$ is given by $\chi^{-1}(g) = \overline{\chi(g)}$ for all $g\in G$.
    \item Summing over the group, we have the relation
    \begin{equation}
    \frac{1}{|G|}\sum_{g\in G}\chi(g) = \begin{cases}
        1 & \chi = \chi_0\\
        0 & \chi \neq \chi_0.
    \end{cases}
    \end{equation}
    \item For any $\chi_1,\chi_2\in G^*$, we have
    \begin{equation}
        \langle \chi_1,\chi_2\rangle := \frac{1}{|G|}\sum_{g\in G}\chi_1(g)\overline{\chi_2(g)} = \begin{cases}
        1 & \chi_1 = \chi_2\\
        0 & \chi_1 \neq \chi_2,
    \end{cases}
    \end{equation}
    i.e., the characters are orthonormal with respect to the normalized $\ell_2$-inner product on $\mathbb{C}^N$.
    \item $G^* \cong G$. In particular, there are exactly $N$ characters and they form an orthonormal basis of $\mathbb{C}^N$.
\end{itemize}
As a prominant example, the (discrete) Fourier modes are the characters of the cyclic group $\mathbb{Z}/N\mathbb{Z}$.

It is an important problem in harmonic analysis to find subspaces spanned by a subset of the characters $\chi_1,...,\chi_N$ on which the $\ell_1$ and $\ell_2$-norms are comparable as in \eqref{l1-l2-comparison-equation}. More generally one can consider orthonormal systems of bounded functions, which are typically called bounded orthonormal systems, but the characters of a finite abelian group are one of the most common examples. Indeed, this is exactly the problem considered in \cite{talagrand1998selecting,guedon2008majorizing}. Of course, the lower bound contained in \eqref{garnaev-gluskin-lower-bound} also applies to this problem, but it is of interest to know whether a better lower bound holds when we restrict our subspace to be spanned by the characters of $G$, or more generally by a subset of a bounded orthonormal system.

Such an improved lower bound was obtained by Bourgain for the Walsh-Paley system, i.e., the characters of the group $G = (\mathbb{Z}/2\mathbb{Z})^k$ (where $N = 2^k$). Specifically, letting $C_{WP}(n)$ denote the same quantity as in \eqref{discrete-gelfand-width-definition-equation}, but with the infimum restricted to subspaces spanned by $N-n$ Walsh-Paley functions, i.e.,
\begin{equation}
    V_n = \text{span}\{\chi\in S\}~\text{for $S\subset G^*$ with $|S| = N-n$},
\end{equation}
Bourgain showed that for any $0 < \alpha < 1$, we have the lower bound
\begin{equation}\label{bourgain-lower-bound}
    C_{WP}(n) \geq c(\alpha)\sqrt{\frac{N\log(N)}{n}}~~\text{if $\sqrt{N} \leq n \leq (1-\alpha)N$}.
\end{equation}
The details of this argument can be found in the Appendix of \cite{guedon2008majorizing} or in \cite{foucart2013invitation}, Chapter 12. This bound improves upon \eqref{garnaev-gluskin-lower-bound} when $n$ is close to $N$, since the $\log(N/n)$ is replaced by $\log(N)$.

Unfortunately, Bourgain's improved lower bound only holds for the Walsh-Paley system. In order to prove Theorem \ref{lower-bound-theorem}, we need a lower bound specifically for the Fourier system, i.e., the characters of the group $G = \mathbb{Z}/N\mathbb{Z}$. Such a bound is actually claimed in a remark at the end of \cite{guedon2008majorizing}, but unfortunately their argument is not correct. Instead, we will need to modify the argument of Bourgain to obtain a lower bound specifically for the Fourier system (which incidentally gives a new lower bound on the size of Fourier matrices satisfying the restricted isometry property). The lower bound we prove is worse than \eqref{bourgain-lower-bound}, but applies to the characters of any abelian group $G$.

The first key ingredient we need is the following lemma, which generalizes the key piece of Bourgain's argument.
\begin{lemma}\label{bourgain-fundamental-lemma}
    Let $0 < \delta  < 1$ and $G$ be an abelian group. For any subset $\Lambda\subset G$ of size $|\Lambda| \geq 2$, there exists a set $S\subset G$ and an element $h\in G$ satisfying the following properties:
    \begin{enumerate}
        \item Letting $|S| = n$, we have
        \begin{equation}
            2^n \geq \frac{1}{2}\min\left\{\delta\frac{\log |G|}{\log (|G|/|\Lambda|)}, \left(\frac{|G|^{1-\delta}}{16}\right)^{2/3}\right\}.
        \end{equation}
        \item For any subset $T\subset S$, we have
        \begin{equation}\label{product-in-lambda-equation}
            h\cdot\prod_{g\in T}g\in \Lambda.
        \end{equation}
        \item If $T_1,T_2\subset S$ and $T_1\neq T_2$, then
        \begin{equation}\label{products-not-equation-equation}
            \prod_{g\in T_1}g\neq \prod_{g\in T_2}g.
        \end{equation}
    \end{enumerate}
\end{lemma}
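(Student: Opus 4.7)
The plan is to construct $S$ iteratively via a greedy procedure, in the spirit of Bourgain's argument for the Walsh–Paley system. For $k \geq 0$, after choosing $g_1, \ldots, g_k$, I track the two objects
\begin{equation*}
A_k := \Bigl\{\prod_{i \in T} g_i : T \subset \{1, \ldots, k\}\Bigr\}, \qquad H_k := \{h \in G : h \cdot A_k \subset \Lambda\},
\end{equation*}
and maintain the invariants $|A_k| = 2^k$ (which is exactly property 3 at the end) and $|H_k| \geq 1$ (so that any $h \in H_n$ at termination witnesses property 2 by the definition of $H_n$). The base case is $A_0 = \{e\}$ and $H_0 = \Lambda$.

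The transition from stage $k$ to stage $k+1$ demands $g_{k+1} \notin A_k A_k^{-1}$ (this forces $A_k \cap g_{k+1} A_k = \emptyset$ and hence $|A_{k+1}| = 2|A_k|$) together with $|H_{k+1}| = |H_k \cap g_{k+1}^{-1} H_k| \geq 1$. The engine of the argument is the convolution identity
\begin{equation*}
\sum_{g \in G} |H_k \cap g^{-1} H_k| = |H_k|^2,
\end{equation*}
obtained by counting pairs $(h, h') \in H_k \times H_k$ indexed by $g = h'h^{-1}$. Coupled with the crude bound $|A_k A_k^{-1}| \leq 3^k$ (valid in any abelian group, since the exponents $\epsilon_i - \delta_i$ of $g_i$ range in $\{-1, 0, 1\}$), this lets me extract a $g_{k+1}$ outside $A_k A_k^{-1}$ realizing $|H_{k+1}| \geq (|H_k|^2 - 3^k |H_k|)/|G|$, which simplifies to the workable recurrence $|H_{k+1}| \geq |H_k|^2/(2|G|)$ whenever $|H_k| \geq 2 \cdot 3^k$.

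Iterating this recurrence from $|H_0| = |\Lambda|$ yields the closed-form estimate $|H_k| \geq 2|G| \bigl(|\Lambda|/(2|G|)\bigr)^{2^k}$, which stays positive so long as $2^k \lesssim \log|G|/\log(|G|/|\Lambda|)$; after absorbing constant-factor slack into the parameter $\delta$, this produces the first branch of the min. The second branch $(|G|^{1-\delta}/16)^{2/3}$ arises in the regime where the distinctness threshold $|H_k| \geq 2 \cdot 3^k$ binds before $|H_k|$ falls to $1$: in that regime, one terminates the iteration at the step just before the distinctness budget is exhausted and reads off the number of steps completed. Balancing the lower bound on $|H_k|$ against the growth of $3^k$ and optimizing in $\delta$ yields the exponent $2/3$.

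The delicate part of the proof is producing exactly the exponent $2/3$ in the second branch: naive bookkeeping via $|A_k A_k^{-1}| \leq 3^k$ only delivers the exponent $\log_3 2 \approx 0.631$, so to close the gap one likely needs a sharper estimate on the contribution of the bad $g$'s to the averaging identity. The natural refinement is a second-moment bound via the additive energy $E(H_k) = \sum_g |H_k \cap g^{-1} H_k|^2$ followed by a Cauchy–Schwarz comparison, which should replace the crude $|A_k A_k^{-1}| \cdot \max$ with a strictly better estimate and let the iteration persist further. Threading this improvement through the termination analysis, together with a careful choice of $\delta$ that interlocks the two branches of the min, is where I anticipate the main technical work.
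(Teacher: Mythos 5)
Your construction is the same as the paper's: greedily choose $g_{k+1}$ outside the excluded set $A_kA_k^{-1}$ (of size at most $3^k$), track $H_k$ (the paper's $\Lambda_j$), and use the counting identity $\sum_{g\in G}|H_k\cap g^{-1}H_k|=|H_k|^2$ to keep $H_k$ nonempty; properties 2 and 3 then follow exactly as you indicate. The genuine gap is in the iteration analysis. Replacing $|H_{k+1}|\ge |H_k|\left(|H_k|-3^k\right)/|G|$ by the ``workable'' recurrence $|H_{k+1}|\ge |H_k|^2/(2|G|)$ costs a factor $\tfrac12$ at every step, and these factors compound doubly exponentially: your closed form carries $(1/2)^{2^k}$, so the threshold $|H_k|\ge 2\cdot 3^k$ is maintained only while $2^k\lesssim \log|G|/\bigl(1+\log(|G|/|\Lambda|)\bigr)$, not while $2^k\lesssim \log|G|/\log(|G|/|\Lambda|)$ as you assert. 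When $|\Lambda|$ is close to $|G|$, so that $\log(|G|/|\Lambda|)\ll 1$, the stated first branch is much larger than $\log|G|$, and this deficit cannot be absorbed into $\delta$. The repair is not to halve uniformly but to take logarithms of the exact recurrence, use $\log(1-3^j/|H_j|)\ge -2\cdot 3^j/|H_j|$ (valid once $|H_j|\ge 2\cdot 3^j$), and observe that the accumulated error $\sum_{j<n} 2^{n-1-j}\,3^j/|H_j|$ is $O(1)$ because $|H_j|\ge |H_{n-1}|\ge 2\cdot 3^{n-1}$; this is how the paper obtains $|H_n|\ge \tfrac{|G|}{8}\bigl(|\Lambda|/|G|\bigr)^{2^n}$ with only a constant loss, which yields the stated first branch.

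On the second branch, your diagnosis is correct: the crude count $|A_kA_k^{-1}|\le 3^k$ delivers the exponent $\log_3 2\approx 0.631$ rather than $2/3$, but you do not need the additive-energy refinement you propose. In fact the paper's own step asserting $3^n\le (2^n)^{3/2}$ is false (since $\log_2 3>3/2$), so the paper's argument as written also only establishes the exponent $\log_3 2$; that weaker exponent is all that is needed downstream, since in Proposition \ref{riesz-product-of-characters-proposition} (with $\delta=1/2$ and $|\Lambda|\ge|G|^{3/4}$) one only requires the second branch to dominate $|G|^{1/4}\log|G|$, and $|G|^{(1/2)\log_3 2}$ does. So the efficient course is to prove the lemma with exponent $\log_3 2$ in the second branch (or adjust the constant bookkeeping accordingly), rather than pursuing a second-moment argument.
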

Before giving the proof of Lemma \ref{bourgain-fundamental-lemma}, which is postponed to the end of this section, we will use it to prove the following result.
\begin{proposition}\label{riesz-product-of-characters-proposition}
    Fix an $0 < \alpha < 1$. Let $G$ be an abelian group and $\Lambda\subset G^*$ a set of characters with $|G|^{3/4} \leq |\Lambda| \leq (1-\alpha)|G|$. Then there exists a function $f:G\rightarrow \mathbb{C}$ in the span of the complement of $\Lambda$, i.e.,
    \begin{equation}
        f = \sum_{\chi\in \Lambda^c} a_\chi\chi,
    \end{equation}
    such that
    \begin{equation}
        \|f\|_{\ell_1} = 1~~\text{and}~~~\|f\|_{\ell_p} \geq c(\alpha) \left(\frac{|\Lambda|\log |G|}{|G|}\right)^{\gamma_p}\text{for $1 < p \leq 2$,}
    \end{equation}
    where $c(\alpha)$ is a constant depending only upon $\alpha$, and the exponent $\gamma_p$ is given by
    \begin{equation}\label{exponent-proposiiton-equation}
        \gamma_p = \max\left\{0,\frac{\log\left(\frac{\pi}{2^{1+1/p}}\right)}{\log{2}}\right\}.
    \end{equation}
\end{proposition}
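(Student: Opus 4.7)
The plan is to imitate Bourgain's Walsh--Paley Riesz-product construction by selecting a carefully dissociated family of characters and forming a Riesz product whose Fourier support lies in $\Lambda^c$. First, apply Lemma \ref{bourgain-fundamental-lemma} to the dual group $G^*$ (which is abelian with $|G^*|=|G|$), taking the ``$\Lambda$'' of the lemma to be $\Lambda^c \subset G^*$. Since $|\Lambda^c|\geq\alpha|G|$, one has $\log(|G|/|\Lambda^c|)\leq\log(1/\alpha)$, and for a suitable fixed $\delta\in(0,1)$ and $|G|$ sufficiently large (so the first branch of the lemma's minimum dominates), this produces a set $S=\{\chi_1,\dots,\chi_n\}\subset G^*$ and a character $\chi_h\in G^*$ satisfying $2^n\geq c_1(\alpha)\log|G|$ with $\chi_h\prod_{\chi\in T}\chi\in\Lambda^c$ for every $T\subset S$ and these products pairwise distinct. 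Equivalently, the family $\{\chi_i\}$ is $\{-1,0,1\}$-dissociated in $G^*$.

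Define the Riesz product
$$R(g):=\chi_h(g)\prod_{i=1}^n\bigl(1+\chi_i(g)\bigr),$$
whose expansion $R = \sum_{T\subset S}\chi_h\prod_{\chi\in T}\chi$ is a sum of $2^n$ distinct characters each in $\Lambda^c$, so $R\in\operatorname{span}(\Lambda^c)$. Parseval gives $\|R\|_{\ell_2}^2=2^n$. The candidate function is $f:=R/\|R\|_{\ell_1}$, which automatically satisfies $\|f\|_{\ell_1}=1$ and $f\in\operatorname{span}(\Lambda^c)$.

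The heart of the argument is the norm comparison
$$\frac{\|R\|_{\ell_p}}{\|R\|_{\ell_1}}\geq c_2(\alpha)\bigl(\tfrac{\pi}{2^{1+1/p}}\bigr)^n,$$
which combined with $2^n\gtrsim_\alpha\log|G|$ and the trivial inequality $(|\Lambda|/|G|)^{\gamma_p}\leq 1$ implies the proposition. Using $|R(g)|=2^n\prod_i|\cos(\theta_i(g)/2)|$ with $\theta_i(g):=\arg\chi_i(g)\in(-\pi,\pi]$, and Fourier-expanding the $2\pi$-periodic function $|\cos(\theta/2)|^p = \sum_k a_k^{(p)} e^{ik\theta}$ whose constant term is $I_p := \tfrac{2}{\pi}\int_0^{\pi/2}\cos^p\varphi\,d\varphi$, an ``iid-like'' factorization $\int_G\prod_i|\cos(\theta_i(g)/2)|^p\,dg\approx I_p^n$ would give $\|R\|_{\ell_p}^p\approx 2^{np}I_p^n$ and, at $p=1$, $\|R\|_{\ell_1}\approx(4/\pi)^n$. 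Since $I_p\geq 1/2$ for $p\in[1,2]$ (by the monotonicity $p\mapsto I_p$ together with $I_2=1/2$), this yields the required ratio $(\pi/2^{1+1/p})^n=2^{n\gamma_p}$.

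The main obstacle is to justify the ``iid-like'' factorization rigorously. Integrating the product of Fourier expansions produces a sum over $\vec k\in\mathbb Z^n$ with $\prod_i\chi_i^{k_i}=\chi_0$; the $\{-1,0,1\}$-dissociation from Lemma \ref{bourgain-fundamental-lemma} eliminates all $\vec k\in\{-1,0,1\}^n\setminus\{0\}$, but the contribution from $\vec k$ with some $|k_i|\geq 2$ is not directly handled. I expect this to be controlled by either (i) ensuring via the construction in the lemma that the $\chi_i$ have sufficiently large orders so that small-degree Fourier relations $\prod_i\chi_i^{k_i}=\chi_0$ are impossible, or (ii) an absorption argument that exploits the decay $|a_k^{(p)}|=O(1/k^2)$ of the Fourier coefficients together with the low density of solutions $\vec k$ to the relation $\prod_i\chi_i^{k_i}=\chi_0$ in the integer lattice. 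This is the technical heart of the proof and is where the precise exponent $\gamma_p$ in Theorem \ref{lower-bound-theorem} arises.
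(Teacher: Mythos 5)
Your setup (applying Lemma \ref{bourgain-fundamental-lemma} to $G^*$ with the set $\Lambda^c$, forming a Riesz product supported on $\Lambda^c$, getting $\|R\|_{\ell_2}^2=2^n$ from the $2^n$ distinct characters, and normalizing in $\ell_1$) matches the paper, and your bound $2^n\gtrsim_\alpha \log|G|$ is fine. The genuine gap is exactly the step you flag: the ``iid-like'' factorization of $\tfrac1{|G|}\sum_g\prod_i|\cos(\theta_i(g)/2)|^p$ cannot be justified from the lemma. The lemma only guarantees that subset-products are distinct, i.e.\ it rules out relations $\prod_i\chi_i^{k_i}=\chi_0$ with exponents $k_i\in\{-1,0,1\}$; it gives no control over relations with $|k_i|\geq 2$, nor over the orders of the $\chi_i$, so neither of your proposed fixes (i) or (ii) can be carried out with the tools at hand. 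This is precisely the flaw in the remark of \cite{guedon2008majorizing} that the paper is correcting: for complex-valued characters one cannot control $\|R\|_{\ell_1}$ for the deterministic Riesz product, and without an upper bound on $\|R\|_{\ell_1}$ of order $(4/\pi)^n$ the ratio $\|R\|_{\ell_p}/\|R\|_{\ell_1}$ could be as small as $2^{n(1/2-1/p)}\leq 1$ (since a priori one only knows $\|R\|_{\ell_1}\leq\|R\|_{\ell_2}=2^{n/2}$), which is useless.

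The paper's resolution avoids this entirely: it introduces independent unimodular twists and considers $\rho_S(z_1,\dots,z_n)=\chi_h\prod_j(1+z_j\chi_j)$ with $|z_j|=1$. For \emph{every} choice of $z$ one has $\|\rho_S(z)\|_{\ell_2}^2=2^n$ and $\|\rho_S(z)\|_{\ell_\infty}\leq 2^n$, hence by interpolation $\|\rho_S(z)\|_{\ell_p}\geq 2^{n(1-1/p)}$ (note this also replaces your $\ell_p$ lower bound, so no factorization is needed there either). For the $\ell_1$ upper bound, one averages over independent uniform phases $z_j=e^{it_j}$: for each fixed $g$ the integrand $\prod_j|1+e^{it_j}\chi_j(g)|$ factors exactly over the $t_j$-integrations, regardless of any arithmetic relations among the $\chi_j(g)$, giving an expected $\ell_1$-norm of exactly $(4/\pi)^n$. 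Hence some $z^*$ satisfies $\|\rho_S(z^*)\|_{\ell_1}\leq(4/\pi)^n$, and normalizing yields $\|f\|_{\ell_p}\geq(\pi/2^{1+1/p})^n=(2^n)^{\gamma_p}$, after which your use of $2^n\gtrsim_\alpha\log|G|$ (together with $\|f\|_{\ell_p}\geq\|f\|_{\ell_1}=1$ when $\gamma_p=0$) closes the argument. In short: the exact factorization is obtained by randomizing auxiliary phases, not by pretending the $\theta_i(g)$ are independent; as written, your proof is incomplete at its central step.
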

Proposition \ref{riesz-product-of-characters-proposition} is only informative if $\gamma_p > 0$, which occurs when $p > 1/(\log(\pi)/\log(2) - 1) \approx 1.535$. 

Setting $p = 2$, Proposition \ref{riesz-product-of-characters-proposition} implies a weaker version of Bourgain's bound \eqref{bourgain-lower-bound} which holds for all groups $G$.  Namely, letting $C_G$ denote the same quantity as in \eqref{discrete-gelfand-width-definition-equation}, but with the infimum restricted to subspaces spanned by $N-n$ characters of $G$, for any $0 < \alpha < 1$ we have the lower bound
\begin{equation}\label{bourgain-lower-bound}
    C_{G}(n) \geq c(\alpha)\left(\frac{N\log(N)}{n}\right)^{\gamma_2}~~\text{if $N^{3/4} \leq n \leq (1-\alpha)N$}.
\end{equation}
Here the exponent $\gamma_2\approx 0.151$. Despite the small exponent, this result improves upon the Gelfand lower bound \eqref{garnaev-gluskin-lower-bound} when $n$ is very close to $N$.

Let us now use Proposition \ref{riesz-product-of-characters-proposition} to prove Theorem \ref{lower-bound-theorem}. Suppose that $n$ is given and let $k$ be the smallest integer such that $(2^{k} + 1)^d \geq 2n$. Note that $2^k \eqsim n^{1/d}$. Consider the group 
\begin{equation}
    G = \left(\mathbb{Z}/2^{k+2}\mathbb{Z}\right)^d = \{-2^{k+1},...,2^{k+1}\}^d.
\end{equation}
Here we have identified the group $G$ with the set of $d$-tuples of integers in the range $[-2^{k+1},2^{k+1}]$. The characters of this group can then be identified with Fourier modes in the same range, i.e.,
\begin{equation}
    G^* = \{\chi_\xi := e^{2\pi i\xi\cdot x}:~\xi\in \mathbb{Z}^d~\text{and $|\xi|_\infty\leq 2^{k+1}$}\}.
\end{equation}
We identify each such character with the Fourier mode $\phi_\xi := e^{i\xi\cdot x}$ on the torus $\mathbb{T}^d$. Suppose that we consider a trigonometric polynomial of degree at most $2^k$, i.e.,
\begin{equation}
    P(x) = \sum_{|\xi|_\infty \leq 2^k} a_\xi\phi_\xi.
\end{equation}
The Marcinkiewicz-Zygmund inequality (see for instance Chapter 10 of \cite{zygmund2002trigonometric}) implies that the $L_p$-norms of $P$ are equivalent to the $\ell_p$-norms of the corresponding linear combination of characters on $G$, i.e.,
\begin{equation}\label{marcin-zygmund-inequality}
    \left\|\sum_{|\xi|_\infty \leq 2^k} a_\xi\phi_\xi\right\|_{L_p} \eqsim \left\|\sum_{|\xi|_\infty \leq 2^k} a_\xi\chi_\xi\right\|_{\ell_p}.
\end{equation}
Finally, suppose that $n$ frequencies $\xi_1,...,\xi_n\in \mathbb{Z}^d$ are sampled. Consider the set
\begin{equation}
    \Lambda := \{\xi\in \mathbb{Z}^d:~2^k < |\xi|_\infty \leq 2^{k+1}\}\cup\{\xi_i:~|\xi_i|_\infty\leq 2^k\}.
\end{equation}
viewed as a subset of $G^*$. Since $(2^k + 1)^d \geq 2n$, the second set above contains at most half of the frequencies satisfying $|\xi|_\infty \leq 2^k$. Thus,
\begin{equation}
    c_d|G| \leq \Lambda \leq (1-c_d)|G|
\end{equation}
for a dimension dependent constant $c_d < 1/2$. Applying Proposition \ref{riesz-product-of-characters-proposition} to $G$ and $\Lambda$, we obtain a linear combination of characters satisfying
\begin{equation}
    \left\|\sum_{\substack{|\xi|_\infty \leq 2^k\\ \xi\notin\{\xi_1,...,\xi_n\}}} a_\xi\chi_\xi\right\|_{\ell_1} = 1,~~\text{and}~~\left\|\sum_{\substack{|\xi|_\infty \leq 2^k\\ \xi\notin\{\xi_1,...,\xi_n\}}} a_\xi\chi_\xi\right\|_{\ell_p} \geq c\log(|G|)^{\gamma_p} \geq c\log(n)^{\gamma_p},
\end{equation}
where the constant $c$ depends only upon $d$. Applying \eqref{marcin-zygmund-inequality} and using Bernstein's inequality, we see that the corresponding linear combination of frequencies satisfies
\begin{equation}
    \left\|\sum_{\substack{|\xi|_\infty \leq 2^k\\ \xi\notin\{\xi_1,...,\xi_n\}}} a_\xi\phi_\xi\right\|_{L_p} \geq c\log(|G|)^{\gamma_p} \geq c\log(n)^{\gamma_p}
\end{equation}
and
\begin{equation}
        \left\|\sum_{\substack{|\xi|_\infty \leq 2^k\\ \xi\notin\{\xi_1,...,\xi_n\}}} a_\xi\phi_\xi\right\|_{B^s_\infty(L_1)} \leq C2^{sk}\left\|\sum_{\substack{|\xi|_\infty \leq 2^k\\ \xi\notin\{\xi_1,...,\xi_n\}}} a_\xi\phi_\xi\right\|_{L_1} \leq C2^{ks} \leq Cn^{s/d}.
\end{equation}
Hence the ratio between the Besov norm $B^s_\infty(L_1)$ and the $L_p$-norm exceeds $cn^{-s/d}\log(n)^{\gamma_p}$ for a function vanishing at the frequencies $\xi_1,...,\xi_n$. Since these frequencies were arbitrary, it follows that
\begin{equation}
    s^F_n(K^s_1)_{L_p} \geq cn^{-s/d}\log(n)^{\gamma_p},
\end{equation}
for a constant $c > 0$ depending only on $d,s$ and $p$, but not on $f$ or $n$. This completes the proof of Theorem \ref{lower-bound-theorem}.

We conclude this section by giving the proofs of Proposition \ref{riesz-product-of-characters-proposition} and Lemma \ref{bourgain-fundamental-lemma}.

\begin{proof}[Proof of Proposition \ref{riesz-product-of-characters-proposition}]
    We apply Lemma \ref{bourgain-fundamental-lemma} to the group $G^*$ and the set $\Lambda^c$ with $\delta = 1/2$. This gives a set $S = \{\chi_1,...,\chi_n\}\subset G^*$ and a character $\chi_h\in G^*$ satisfying the properties of Lemma \ref{bourgain-fundamental-lemma} with
    \begin{equation}
        2^n \geq C\min\left\{\frac{\log |G|}{-\log (1 - |\Lambda|/|G|)}, |G|^{1/3}\right\}.
    \end{equation}
    Since $|\Lambda| \leq (1-\alpha)|G|$ it follows that $-\log (1 - |\Lambda|/|G|) \leq C(\alpha) (|\Lambda|/|G|)$, and so
    \begin{equation}
        2^n \geq c(\alpha)\min\left\{\frac{|G|\log |G|}{|\Lambda|}, |G|^{1/3}\right\}.
    \end{equation}
    Next, since $|\Lambda| \geq |G|^{3/4}$ it follows that $|G|\log |G|/|\Lambda| \leq C|G|^{1/3}$, and so
    \begin{equation}\label{lower-bound-on-n-equation-proposition}
        2^n \geq c(\alpha)\frac{|\Lambda|\log |G|}{|G|}.
    \end{equation}
    for a constant $c(\alpha)$ depending only upon $\alpha$.

    One idea for constructing $f$ based on the this set $S$ is to consider the Riesz product (indeed this was proposed in the appendix of \cite{guedon2008majorizing})
    \begin{equation}
        \rho_S := \chi_h\prod_{j=1}^n(1 + \chi_j).
    \end{equation}
    From the second property in Lemma 1, we see that $\rho_S$ is spanned by the characters in $\Lambda^c$, while the third property in Lemma 1 shows that $\rho_S$ is in fact the sum of $2^n$ distinct characters in $\Lambda^c$. From this it follows that $\|\rho_S\|^2_{\ell_2} = 2^n$, and that $\|\rho_S\|_{\ell_\infty} = 2^n$. However, unfortunately we cannot conclude that $\|\rho_S\|_{\ell_1} = 1$ (as claimed in the appendix of \cite{guedon2008majorizing}) since although
    \begin{equation}
        \frac{1}{G}\sum_{g\in G} \prod_{j=1}^n(1 + \chi_j(g)) = 1,
    \end{equation}
    the terms in this sum are not real and positive (as $\chi_j(g)$ is in general a \textit{complex} number of norm $1$). In the special case where all characters are real-valued, i.e., the case where $G = (\mathbb{Z}/2\mathbb{Z})^k$, this argument does work and gives Bourgain's lower bound \eqref{bourgain-lower-bound}.

    For general groups $G$, where at least some characters will be complex-valued, we proceed as follows. Observe that for any set of complex numbers $z_1,...,z_n\in \mathbb{C}$ with $|z_j| = 1$, the previous remarks also apply to the modified Riesz product
    \begin{equation}
        \rho_S(z_1,...,z_n) := \rho_S := \chi_h\prod_{j=1}^n(1 + z_j\chi_j).
    \end{equation}
    Specifically, for any $z_1,...,z_n$, $\rho_S(z_1,...,z_n)$ is spanned by the characters in $\Lambda^c$, and
    \begin{equation}
        \|\rho_S(z_1,...,z_n)\|^2_{\ell^2} = \|\rho_S(z_1,...,z_n)\|_{\ell_\infty} = 2^n.
    \end{equation}
    By a simple interpolation argument, this also implies that
    \begin{equation}\label{interpolation-lower-bound}
        \|\rho_S(z_1,...,z_n)\|_{\ell^p} \geq 2^{n(1-1/p)}~~\text{for $1\leq p \leq 2$}.
    \end{equation}
    Next, let us consider the average $\ell_1$-norm over uniformly random choices of $z_j$. We calculate (writing $z_j = e^{2\pi it_j}$)
    \begin{equation}
    \begin{split}
        \frac{1}{(2\pi)^n}\int_{0}^{2\pi}\cdots \int_{0}^{2\pi}&\|\rho_S(e^{2\pi it_1},...,e^{2\pi it_n})\|_{\ell_1}dt_1\cdots dt_n\\
        &= \frac{1}{(2\pi)^n}\int_{0}^{2\pi}\cdots \int_{0}^{2\pi}\frac{1}{|G|}\sum_{g\in G}\prod_{j=1}^n|1 + e^{2\pi it_j}\chi_j(g)|dt_1\cdots dt_n\\
        &= \frac{1}{|G|}\sum_{g\in G}\frac{1}{(2\pi)^n}\prod_{j=1}^n\left(\int_{0}^{2\pi}|1 + e^{2\pi it}\chi_j(g)|dt\right).
    \end{split}
    \end{equation}
    Now, since $|\chi_j(g)| = 1$, each of the integrals appearing the above product are equal. Specifically, for every $j = 1,...,n$ and $g\in G$ we have
    \begin{equation}
    \begin{split}
        \int_{0}^{2\pi}|1 + e^{2\pi it}\chi_j(g)|dt = \int_0^{2\pi} |1 + e^{2\pi it}|dt &= \sqrt{2}\int_{0}^{2\pi} \sqrt{1 + \cos(t)}dt\\
        &= 4\sqrt{2}\left[\sqrt{1 - \cos(t)}\right]_0^{\pi} = 8.
    \end{split}
    \end{equation}
    Hence, we see that the average $\ell_1$-norm is equal to
    \begin{equation}
        \frac{1}{(2\pi)^n}\int_{0}^{2\pi}\cdots \int_{0}^{2\pi}\|\rho_S(e^{2\pi it_1},...,e^{2\pi it_n})\|_{\ell_1}dt_1\cdots dt_n = \left(\frac{4}{\pi}\right)^n,
    \end{equation}
    and thus there exists a choice $z_1^*,...,z_n^*\in \mathbb{C}$ with $|z_j| = 1$ such that
    \begin{equation}\label{l1-upper-bound-modified-riesz-product}
        \|\rho_S(z^*_1,...,z^*_n)\|_{\ell_1} \leq (4/\pi)^n
    \end{equation}
    
    We set 
    \begin{equation}
        f := \frac{\rho_S(z^*_1,...,z^*_n)}{\|\rho_S(z^*_1,...,z^*_n)\|_{\ell_1}}.
    \end{equation}
    Obviously, $\|f\|_{\ell_1} = 1$, so that $\|f\|_{\ell_p} \geq 1$ for $p \geq 1$. In addition, \eqref{interpolation-lower-bound} combined with the upper bound \eqref{l1-upper-bound-modified-riesz-product} implies that
    \begin{equation}
        \|f\|_{\ell_p} \geq \left(\frac{\pi}{2^{1 + 1/p}}\right)^n.
    \end{equation}
    Finally, we use \eqref{lower-bound-on-n-equation-proposition} to conclude that
    \begin{equation}
        \|f\|_{\ell_p} \geq (2^n)^{\gamma_p}\geq \left(c(\alpha)\frac{|\Lambda|\log |G|}{|G|}\right)^{\gamma_p},
    \end{equation}
    where $\gamma_p = \log(\pi/2^{1 + 1/p})/\log(2)$. This statement is only useful when $\gamma_p > 0$, and since $\gamma_p$ is bounded for $p \leq 2$, we get
    \begin{equation}
        \|f\|_{\ell_p} \geq c(\alpha)\left(\frac{|\Lambda|\log |G|}{|G|}\right)^{\gamma_p},
    \end{equation}
    which completes the proof.
\end{proof}
\begin{proof}[Proof of Lemma \ref{bourgain-fundamental-lemma}]
    The proof largely follows the argument given in the appendix of \cite{guedon2008majorizing}, with a few key modifications. For an element $g\in G$ and a subset $S\subset G$, we write
    \begin{equation}
        g\cdot S := \{gh:~h\in S\}
    \end{equation}
    for the product of $g$ with all the elements of $S$.
    
    We iteratively construct a sequence of sets $S_0,S_1,...$ and $\Lambda_0,\Lambda_1,...$ as follows.
Set $S_0 = \emptyset$ and $\Lambda_0 = \Lambda$. Suppose that $S_j$ and $\Lambda_j$ have been constructed and that $\Lambda_j \neq \emptyset$. Define an `excluded' set via
    \begin{equation}
        E_j := \left\{\prod_{g\in S_j} g^{p_g}:~p_g\in \{0,\pm1\}~\text{for each $g$}\right\},
    \end{equation}
    and observe that $|E_j| \leq 3^{|S_j|}$. If $E_j = G$, or if $\Lambda_j\cap g^{-1}\cdot\Lambda_j = \emptyset$ for every $g\in G\backslash E_j$, we have completed the construction and set $S := S_j$ and choose any $h\in \Lambda_j$. Otherwise, we select
    \begin{equation}
        g_j\in \arg\max_{g\in G\backslash E_j} |\Lambda_j\cap g^{-1}\cdot\Lambda_j|,
    \end{equation}
    and set $S_{j+1} = S_j\cup\{g_j\}$ and $\Lambda_{j+1} = \Lambda_j\cap g^{-1}_j\cdot\Lambda_j\neq \emptyset$. Note that $\Lambda_{j+1}\subset \Lambda_j$.

    Next, we verify that at each step the second and third properties claimed in the lemma are satisfied by $S_j$ and any $h\in \Lambda_j$. This is obvious when $j=0$ since $S_0 = \emptyset$. Suppose now that these properties hold at step $j$. 
    
    To verify that the second property holds at step $j+1$, let $T\subset S_{j+1}$ and $h\in \Lambda_{j+1}$. If $T \subset S_j$, then since $h\in \Lambda_{j+1}\subset \Lambda_j$, we see by induction that \eqref{product-in-lambda-equation} holds. Otherwise, $g_j\in T$ and so
    \begin{equation}\label{shifted-product-lambda-j-equation}
        h\cdot\prod_{g\in T}g = (g_j\cdot h)\cdot \prod_{g\in T\backslash \{g_j\}}g.
    \end{equation}
    Since $h\in \Lambda_{j+1} \subset g_j^{-1}\cdot\Lambda_j$, it follows that $g_jh\in \Lambda_j$. Hence, by induction we see that the right hand side of \eqref{shifted-product-lambda-j-equation} is in $\Lambda$, and thus \eqref{product-in-lambda-equation} holds. 
    
    To verify that the third property holds, let $T_1,T_2\subset S_{j+1}$ and $T_1\neq T_2$. If $g_j \notin T_1\Delta T_2$, then either $g_j$ doesn't appear or it can be canceled from both sides of \eqref{products-not-equation-equation}, so that \eqref{products-not-equation-equation} holds by induction. If $g_j\in T_1\Delta T_2$, say $g_j\in T_1$ and $g_j\notin T_2$, then by rearranging \eqref{products-not-equation-equation} is equivalent to
    \begin{equation}
        g_j\neq \left(\prod_{g\in T_2}g\right)\left(\prod_{g\in T_1\backslash \{g_j\}}g\right)^{-1},
    \end{equation}
    and so \eqref{products-not-equation-equation} holds because $g_j\notin E_j$.

    Finally, to complete the proof, we estimate how many steps this iterative construction will take before completion. Observe that the construction stops at step $j = n-1$ only if either $3^{n-1} \geq |E_{n-1}| = |G|$ or if $|\Lambda_n| = 0$. Thus, we will certainly reach step $n$ provided that we can prove that $|\Lambda_j| \geq 2\cdot 3^j$ for $j=0,...,n$. We will prove by induction on $n$ that this holds provided that
    \begin{equation}\label{assumption-on-n-induction-equation}
        2^n \leq \delta\frac{\log |G|}{\log (|G|/|\Lambda|)}~~\text{and}~~2^n \leq \left(\frac{|G|^{1-\delta}}{16}\right)^{2/3}.
    \end{equation} 
    Since $|S_n| = n$, and the construction will reach step $n$ as long as \eqref{assumption-on-n-induction-equation} is satisfies, this verifies the first property in the lemma and completes the proof.
    
    The base case $n = 0$ follows since $|\Lambda| \geq 2$. Now let $n \geq 1$ be such that \eqref{assumption-on-n-induction-equation} holds and $|\Lambda_{j}| \geq 2\cdot 3^{j}$ for $j=0,...,n-1$. The key to the inductive step is the following lower bound on the size of $|\Lambda_{j+1}|$ in terms of $|\Lambda_{j}|$. Observe that
    \begin{equation}
        \sum_{g\in G} |\Lambda_j\cap g^{-1}\cdot\Lambda_j| = \sum_{g\in G} \sum_{h\in \Lambda_j} 1_{gh\in \Lambda_j} = \sum_{h\in \Lambda_j} \sum_{g\in G}1_{gh\in \Lambda_j} = |\Lambda_j|^2.
    \end{equation}
    Since we also obviously have $|\Lambda_j\cap g^{-1}\cdot\Lambda_j| \leq |\Lambda_j|$ it follows that
    \begin{equation}
        \sum_{g\in G\backslash E_j} |\Lambda_j\cap g^{-1}\cdot\Lambda_j| \geq |\Lambda_j|(|\Lambda_j| - |E_j|) \geq |\Lambda_j|(|\Lambda_j| - 3^j),
    \end{equation}
    since $|S_j| = j$ and so $|E_j| \leq 3^j$. It follows that there eixsts a $g \in G\backslash E_j$ such that
    \begin{equation}
        |\Lambda_j\cap g^{-1}\cdot\Lambda_j| \geq \frac{|\Lambda_j|(|\Lambda_j| - 3^j)}{|G| - |E_j|} \geq \frac{|\Lambda_j|(|\Lambda_j| - 3^j)}{|G|},
    \end{equation}
    and thus
    \begin{equation}\label{lambda-j-size-lower-bound}
        |\Lambda_{j+1}| \geq |\Lambda_j|\frac{(|\Lambda_j| - 3^j)}{|G|}.
    \end{equation}
    Rewriting the equation \eqref{lambda-j-size-lower-bound}, we obtain
    \begin{equation}
        \frac{|\Lambda_{j}|}{|G|} \geq \left(\frac{|\Lambda_{j-1}|}{|G|}\right)^2\left(1 - \frac{3^{j-1}}{|\Lambda_{j-1}|}\right),
    \end{equation}
    and taking logarithms, we obtain
    \begin{equation}
        \log\left(\frac{|\Lambda_{j}|}{|G|}\right) \geq 2\log\left(\frac{|\Lambda_{j-1}|}{|G|}\right) + \log\left(1 - \frac{3^{j-1}}{|\Lambda_{j-1}|}\right).
    \end{equation}
    If $|\Lambda_{j-1}| \geq 2\cdot 3^{j-1}$, then we may use the inequality $\log(1 - x) \geq -2x$ for $0 < x < 1/2$ (the logarithms here are taken to base $2$) to obtain the bound
    \begin{equation}
        \log\left(\frac{|\Lambda_{j}|}{|G|}\right) \geq 2\log\left(\frac{|\Lambda_{j-1}|}{|G|}\right) - 2\frac{3^{j-1}}{|\Lambda_{j-1}|}.
    \end{equation}
    Iteratively applying this inequality and using the inductive assumption that $|\Lambda_{j-1}| \geq 2\cdot 3^{j-1}$ for $j=1,...,n$, we get
    \begin{equation}
    \begin{split}
        \log\left(\frac{|\Lambda_{n}|}{|G|}\right) \geq 2^{n}\log\left(\frac{|\Lambda|}{|G|}\right) - 2\sum_{k=0}^{n-1}\frac{2^{n-k-1}3^k}{|\Lambda_k|} &\geq 2^{n}\log\left(\frac{|\Lambda|}{|G|}\right) - 2\frac{1}{|\Lambda_{n-1}|}\sum_{k=0}^{n-1}2^{n-k-1}3^k\\
        &\geq 2^{n}\log\left(\frac{|\Lambda|}{|G|}\right) - 2\frac{3^{n-1}}{|\Lambda_{n-1}|}\sum_{k=0}^{\infty}\left(\frac{2}{3}\right)^k\\
        &\geq 2^{n}\log\left(\frac{|\Lambda|}{|G|}\right) - 2\frac{3^{n}}{|\Lambda_{n-1}|}\\
        & \geq 2^{n}\log\left(\frac{|\Lambda|}{|G|}\right) - 3.
    \end{split}
    \end{equation}
    This implies that 
    \begin{equation}
        |\Lambda_n| \geq \frac{|G|}{8}\left(\frac{|\Lambda|}{|G|}\right)^{2^n}.
    \end{equation}
    Using the first inequality in \eqref{assumption-on-n-induction-equation} this implies that
    \begin{equation}
        |\Lambda_n| \geq \frac{|G|^{1-\delta}}{8}.
    \end{equation}
    Finally, the second inequality in \eqref{assumption-on-n-induction-equation} implies that
    \begin{equation}
        2\cdot 3^n \leq 2\cdot (2^n)^{3/2} \leq \frac{|G|^{1-\delta}}{8}.
    \end{equation}
    Thus, $|\Lambda_n| \geq 2\cdot 3^n$ and this completes the inductive step.
\end{proof}

\section{Numerical experiments}\label{numerical-experiments-section}
Let us begin this section by discussing practical algorithms for the estimation of $f^*$ from linear measurements $m_i = \lambda_i(f)$. Assume that $f^*\in B^s_\infty(L_q)$. Note that we have weakened the assumption that $f^*\in K^s_q$, i.e., we do not assume an a priori bound on the norm $\|f\|_{B^s_\infty(L_q)}$. We consider obtaining an estimate $\tilde{f}$ from the measurements $m_i$ by solving the convex optimization problem
\begin{equation}\label{general-convex-optimization-problem}
    \tilde{f} = \arg\min_{\lambda_i(f) = m_i} \|f\|_{B^s_\infty(L_q)}.
\end{equation}
Observe that since $f^*$ itself certainly satisfies the measurements, we obtain $\|\tilde{f}\|_{B^s_\infty(L_q)} \leq \|f^*\|_{B^s_\infty(L_q)}$. This means that the estimation error satisfies
\begin{equation}
    \|\tilde{f} - f^*\|_{B^s_\infty(L_q)} \leq 2\|f^*\|_{B^s_\infty(L_q)}~\text{and}~\lambda_i(\tilde{f} - f^*) = 0,
\end{equation}
which implies that
\begin{equation}\label{fundamental-recovery-error-optimization-bound}
    \|\tilde{f} - f^*\|_{L_p} \leq 2\|f^*\|_{B^s_\infty(L_q)}\sup\{\|f\|_{L_q}:~f\in B^s_q~\text{and}~\lambda_i(f) = 1~\text{for}~i = 1,...,n\}.
\end{equation}
Hence, solving the optimization problem \ref{general-convex-optimization-problem} gives a near-optimal estimator for any set of linear measurements under the smoothness assumption $f^*\in B^s_\infty(L_q)$. Note also that this algorithm doesn't require knowledge of the norm $\|f^*\|_{B^s_\infty(L_q)}$. For example, if the measurements $\lambda_i$ are chosen to realize the Gelfand widths \eqref{gelfand-widths-asymptotics}, this implies the bound
\begin{equation}
    \|\tilde{f} - f^*\|_{L_p} \leq 2\|f^*\|_{B^s_\infty(L_q)}d^n(K^s_q)_{L_p},
\end{equation}
while if we choose nearly optimal Fourier coefficients via the hierarchical random sub-sampling scheme described after Theorem \ref{main-theorem}, we obtain the bound
\begin{equation}
    \|\tilde{f} - f^*\|_{L_p} \leq 2\|f^*\|_{B^s_\infty(L_q)}s^F_n(K^s_q)_{L_p}.
\end{equation}
Of course, this applies not just for the Besov spaces but for any centrally symmetric convex set $K$ with the Besov norms replaced by the the gauge norm of $K$, i.e., the norm for which $K$ is the unit ball, which can be defined by
\begin{equation}
    \|f\|_K := \inf\{t > 0:f/t\in K\}.
\end{equation}

Next, let us discuss the practical implications of the recovery bound proved in Theorem \ref{main-theorem}. We will do this by means of the following example. Let $d = 2$ and $\Omega\subset \mathbb{T}^2$ be a subset of finite perimeter. Suppose that the group truth $f^*$ is the characteristic function of $\Omega$, i.e., 
\begin{equation}
    f^*(x) = \begin{cases}
        1 & x\in \Omega\\
        0 & x\notin \Omega.
    \end{cases}
\end{equation}
Since $\Omega$ has finite perimeter, it follows that $f^*$ is of bounded variation and thus contained in (a scalar multiple of) the model class $K^1_1$, specifically, we have
\begin{equation}
    \|f^*\|_{B^1_\infty(L_1)} \leq \|f^*\|_{BV} < \infty.
\end{equation}
We consider the problem of recovering $f^*$ under this assumption from linear measurements. In particular, we will compare the results when the measurements are restricted to point samples (sampling numbers given in \eqref{sampling-numbers-bound-equation}) to the results when general linear measurements are allowed (Gelfand widths given in \eqref{gelfand-widths-asymptotics}). Theorem \ref{main-theorem} implies that restricting the measurements to be Fourier coefficients gives results which are, up to logarithmic factors, the same as the Gelfand widths. 

Suppose that the recovery error is $L_p$ for $1\leq p < 2$ (these are the values of $p$ for which we obtain a compact embedding $BV\subset\subset L_p$). Summarizing \eqref{gelfand-widths-asymptotics} and \eqref{sampling-numbers-bound-equation}, from $n$-measurements we obtain a recovery error
\begin{equation}\label{bound-eq-844}
    \|\tilde{f} - f^*\|_{L_p} \leq C\|f^*\|_{BV}\begin{cases}
        n^{-1/2} & \text{general linear measurements}\\
        n^{-1/2 + 1 - 1/p} & \text{point samples.}
    \end{cases}
\end{equation}
We wish to examine the practical implications of these differing rates. To do this, let us consider the following three sets
\begin{enumerate}
    \item $T := \{x\in \mathbb{T}^d:~1/4 \leq \hat{f}(x)\leq 3/4\}$
    \item $P := \{x\notin \Omega:~\hat{f}(x) > 3/4\}$
    \item $N := \{x\in \Omega:~\hat{f}(x) < 1/4\}$
\end{enumerate}
We remark that there is nothing special about our choice of $1/4$ and $3/4$ in the above definitions, we could have chosen $c$ and $1-c$ for any $0 < c < 1/2$. 

We can interpret $T$ as the set on which the estimate $\tilde{f}$ \textit{transitions} between the values $0$ and $1$ taken by $f^*$, $P$ as the set of \textit{false positives} where $\tilde{f}$ is close to $1$, but $f^*$ equals $0$, and $N$ as the set of \textit{false negatives} where $\tilde{f}$ is close to $0$, but $f^*$ is equal to $1$.

We observe that $T\cup P\cup N \subset \{x:~|\hat{f}(x) - f^*(x)| \geq 1/4\}$, and thus
\begin{equation}
    |T\cup P\cup N| \leq \left(\frac{\|\tilde{f} - f^*\|_{L_p}}{4}\right)^p.
\end{equation}
Combining this with the bound \eqref{bound-eq-844} and optimizing over $p$ (for point samples we choose $p = 1$, while for general linear measurements we take $p\rightarrow 2$), we obtain the bound
\begin{equation}\label{error-set-bound-equation-865}
    |T\cup P\cup N| \leq C\begin{cases}
        (\|f^*\|_{BV}^{2}n^{-1})^\alpha & \text{general linear measurements (for any $\alpha < 1$)}\\
        \|f^*\|_{BV}n^{-1/2} & \text{point samples}.
    \end{cases}
\end{equation}
This shows that the rate at which $|T\cup P\cup N|$ decreases with $n$ is much faster when using general linear measurements vs point samples (essentially $n^{-1}$ vs $n^{-1/2}$).

From a practical standpoint this means the following: The ground truth $f^*$ has a sharp edge at the boundary of $\Omega$. The set $T$ is (roughly) the set where the estimate $\tilde{f}$ transitions from $0$ to $1$. Thus, if $|T|$ is small, then the estimate $\tilde{f}$ also exhibits a sharp edge. Further, $P$ is (roughly) the set enclosed by the edge of $\tilde{f}$ but not lying in $\Omega$, while $N$ is (roughly) the set lying in $\Omega$ by not enclosed by the edge of $\tilde{f}$. Hence, if both $|P|$ and $|N|$ are small, this means that the edge of $\tilde{f}$ is close to the true edge of $f^*$. So the measure $|T\cup P\cup N|$ is essentially a proxy for how accurately the edges of $f^*$ are captured by the estimate $\tilde{f}$.

If we return to the bound \eqref{error-set-bound-equation-865}, we see that when using point samples, the edges of $f^*$ can be captured only to accuracy $n^{-1/2}$. This certainly makes sense, since by sampling on a uniform grid with spacing $h$ we cannot expect to capture edges more accurately than the spacing $h$. On the other hand, using appropriate general linear measurements, we can capture edges with accuracy nearly $n^{-1}$! This greatly improved capturing of edges is precisely what is enabled by non-linear approximation (the recovery is non-linear), and is quantified by allowing a stronger error norm ($L_p$ for $p > 1$) without deterioration of the rate in \eqref{bound-eq-844}.

Theorem \ref{main-theorem} shows that the Fourier sampling numbers behave like the Gelfand widths up to logarithmic factors. This means that the same greatly improved capturing of edges is also possible if we restrict to measuring Fourier coefficients of $f^*$ instead of general linear measurements. We conclude this section by giving a numerical demonstration of this phenomenon.

As ground truth we take the function $f^*$ given by
\begin{equation}
    f^*(x,y) = -0.75 \cdot\begin{cases}
        1 & 1\leq x \leq 5 ~\text{and}~2\leq y\leq 4\\
        0 & \text{otherwise}
    \end{cases} -1.0\cdot \begin{cases}
        1 & 1\leq |x - 3| + |y - 4| \leq 1\\
        0 & \text{otherwise},
    \end{cases}
\end{equation}
extended $2\pi$-periodically.
This function is shown in Figure \ref{ground-truth-plot-figure}. The ground truth $f^*$ is a sum of two characteristic functions, and is therefore in $BV$. In addition, the exact Fourier coefficients of $f^*$ can be easily calculated.

\begin{figure}
    \includegraphics[scale = 1.0]{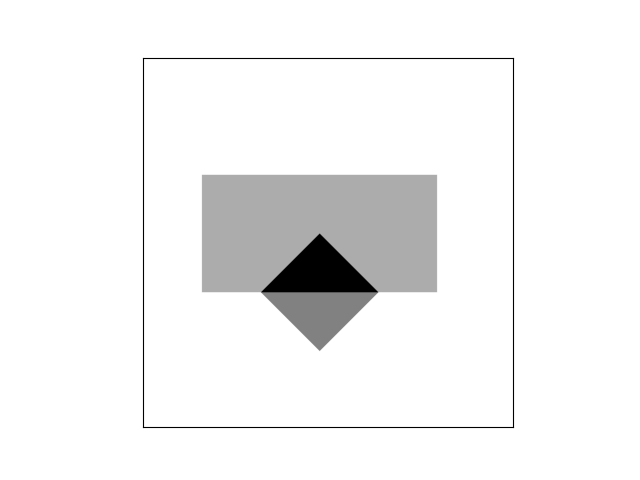}
    \caption{The ground truth function $f^*$ in our experiment.}\label{ground-truth-plot-figure}
\end{figure}

We compare different methods for recovering $f^*$ from its Fourier measurements. We want to emphasize that we are considering the recovery of the \textit{continuous} function $f^*$ from its \textit{exact continuous} Fourier coefficients. We do not discretize on a grid and compute discrete Fourier transforms, thus avoiding the well-known `inverse-crime' \cite{guerquin2011realistic}. We compare the recovery using the following different sampling and recovery methods (in each case we sample exactly $289$ Fourier coefficients):
\begin{enumerate}
    \item Sampling the lowest Fourier coefficients, i.e., sample the Fourier coefficients for frequencies $\xi$ on a $[-8,...,8]^2$ grid, and reconstructing by summing the Fourier series. The result is shown in Figure \ref{fourier-sum-plot-figure}. We see that the edges of $f^*$ have been smeared out significantly and the Gibbs phenomenon, i.e., spurious oscillations, are clearly evident.
    \item Sampling the lowest Fourier coefficients, i.e., sample the Fourier coefficients for frequencies $\xi$ on a $[-8,...,8]^2$ grid, and reconstructing using a smoothed Fourier sum (in our case a tensor product de la Vall\'ee Poussin sum). The result is shown in Figure \ref{de-la-vallee-sum-plot-figure}. The Gibbs phenomenon is no longer present, but the edges of $f^*$ have been smeared out even more.
    \item Sampling the lowest Fourier coefficients, i.e., sample the Fourier coefficients for frequencies $\xi$ on a $[-8,...,8]^2$ grid, and reconstructing by solving the optimization problem \eqref{general-convex-optimization-problem} for the model class $BV$, i.e., solving
    \begin{equation}\label{BV-reconstruction-optimization}
        \tilde{f} := \arg\min_{\hat{f}(\xi_i) = \hat{f^*}(\xi_i)} \|f\|_{BV}.
    \end{equation}
    The result is shown in Table \ref{lowest-frequencies-bv-norm-min-plot-figure}. We see that $\tilde{f}$ has sharp edges since it is obtained via a $BV$-norm minimization, however, the edges do not line up with the edges of $f^*$ particularly accurately. In particular, corners are rounded out significantly and point where four different values touch is significantly altered.
    \item Sampling Fourier coefficients via the hierarchical random sub-sampling scheme described following Theorem \ref{main-theorem}, and reconstruction by solving the optimization problem \eqref{BV-reconstruction-optimization}. The parameters $\alpha$ and $k_0$ have been chosen to ensure that exactly $289$ frequencies are sampled. The result is shown in Table \ref{hierarchical-bv-norm-min-plot-figure}. Although the reconstruction is still not perfect, we see that the edges of $f^*$ are much more accurately captured..
    \item Sampling Fourier coefficients uniformly at random from a $[-1024,...,1024]^2$ grid (this is the same grid size that the functions are plotted on), and reconstructing by solving the optimization problem \eqref{BV-reconstruction-optimization}. This sampling scheme is meant to test the sampling methods proposed for the discrete recovery problem. The result is shown in Figure \ref{uniformly-random-bv-norm-min-plot-figure}. We can see that a uniform random sub-sampling completely fails for the continuous recovery problem.
\end{enumerate}

\begin{figure}
    \includegraphics[scale = 1.0]{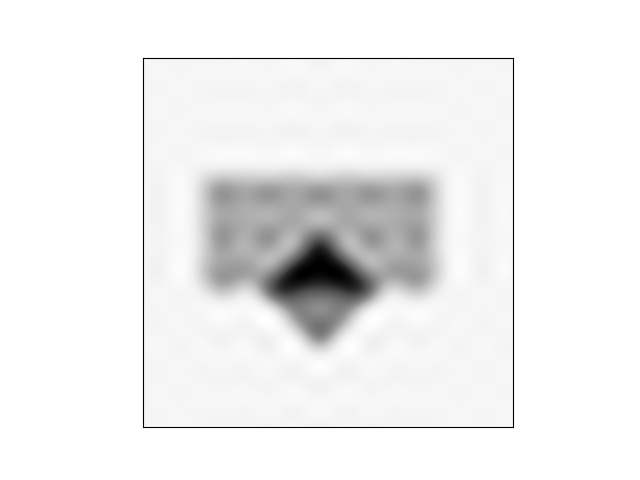}
    \caption{The reconstruction by summing the Fourier series on a $[-8,...,8]^2$ grid.}\label{fourier-sum-plot-figure}
\end{figure}
\begin{figure}
    \includegraphics[scale = 1.0]{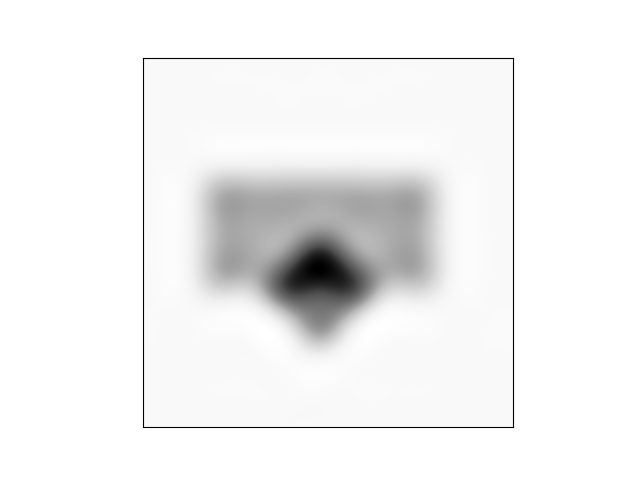}
    \caption{The reconstruction by taking the de la Vall\'ee Poussin sum on a $[-8,...,8]^2$ grid.}\label{de-la-vallee-sum-plot-figure}
\end{figure}
\begin{figure}
    \includegraphics[scale = 1.0]{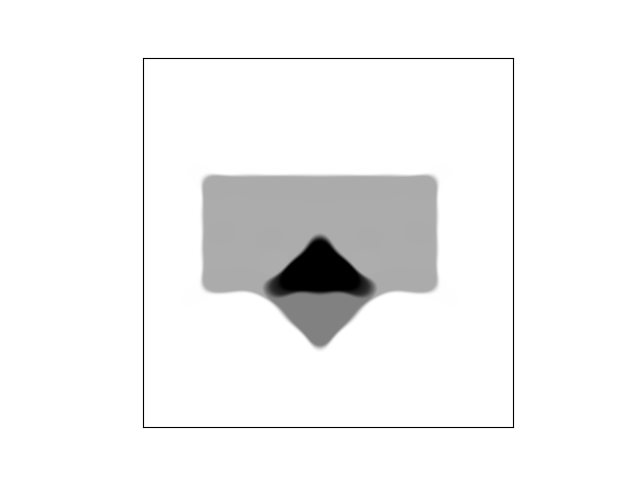}
    \caption{The reconstruction by sampling the Fourier coefficients on a $[-8,...,8]^2$ grid, and solving the $BV$-norm minimization problem \eqref{BV-reconstruction-optimization}.}\label{lowest-frequencies-bv-norm-min-plot-figure}
\end{figure}
\begin{figure}
    \includegraphics[scale = 1.0]{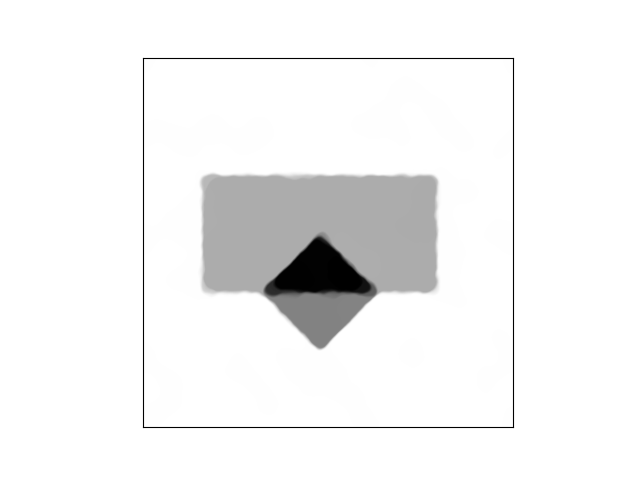}
    \caption{The reconstruction by sub-sampling the Fourier coefficients in a hierarchical manner, and solving the $BV$-norm minimization problem \eqref{BV-reconstruction-optimization}.}\label{hierarchical-bv-norm-min-plot-figure}
\end{figure}
\begin{figure}
    \includegraphics[scale = 1.0]{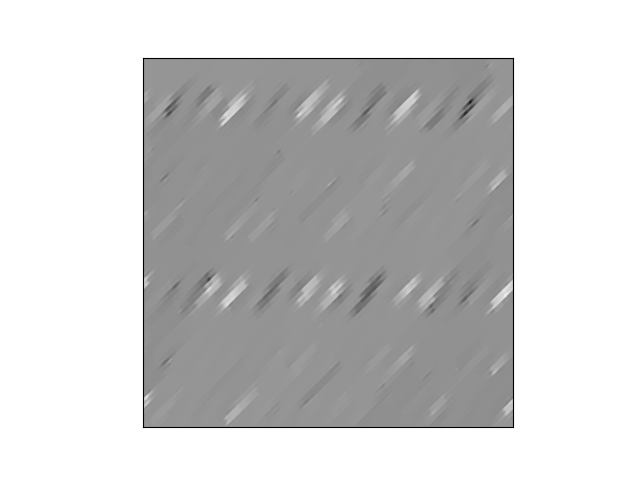}
    \caption{The reconstruction by uniformly sub-sampling the Fourier coefficients from a large grid, and solving the $BV$-norm minimization problem \eqref{BV-reconstruction-optimization}.}\label{uniformly-random-bv-norm-min-plot-figure}
\end{figure}
\begin{figure}
    \includegraphics[scale = 1.0]{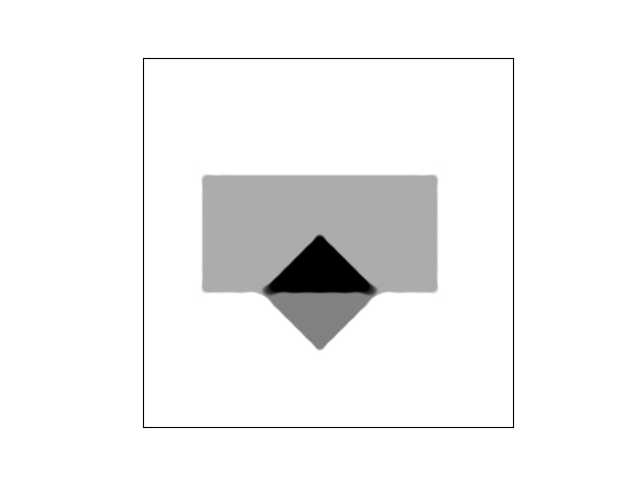}
    \caption{The reconstruction by sampling the Fourier coefficients on a $[-16,...,16]^2$ grid, and solving the $BV$-norm minimization problem \eqref{BV-reconstruction-optimization}.}\label{lowest-frequencies-bv-norm-min-plot-figure-high}
\end{figure}
\begin{figure}
    \includegraphics[scale = 0.75]{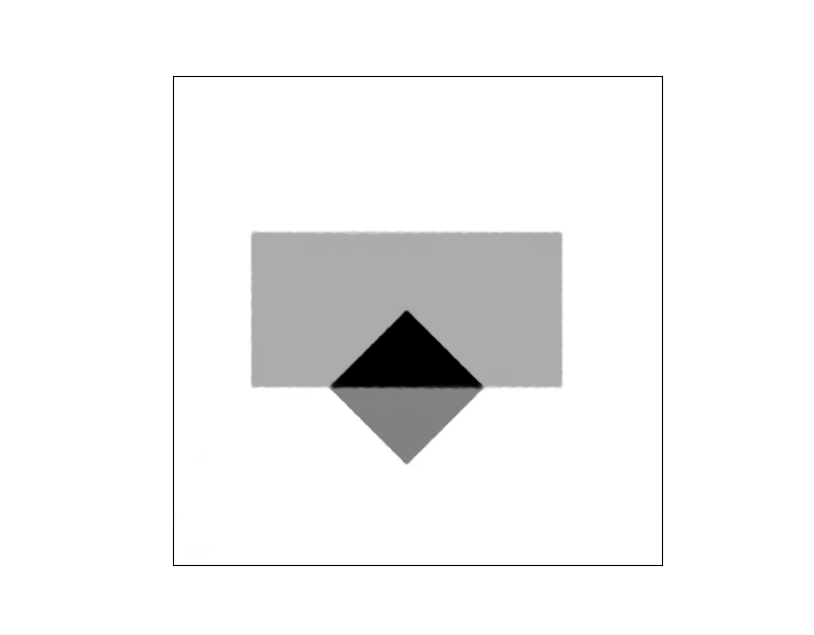}
    \caption{The reconstruction by sub-sampling the Fourier coefficients in a hierarchical manner (sampling a total of $1089$ frequencies, and solving the $BV$-norm minimization problem \eqref{BV-reconstruction-optimization}.}\label{hierarchical-bv-norm-min-plot-figure-high}
\end{figure}

To complete the description of our experiments, let us describe how the optimization problem \eqref{BV-reconstruction-optimization} was numerically solved. Specifically, we calculate a numerical solution $\tilde{f}_{num}$ which satisfies
\begin{equation}
    \hat{\tilde f}_{num}(\xi_i) = f^*(\xi_i)~\text{for $i=1,...,n$}~~~\text{and}~~~\|\tilde{f}_{num}\|_{BV} \leq C\inf_{\hat{f}(\xi_i) = \hat{f^*}(\xi_i)} \|f\|_{BV}
\end{equation}
for a constant $C$. Such a near optimizer is sufficient to guarantee the recovery error in \eqref{fundamental-recovery-error-optimization-bound} up to the same constant factor $C$. Let $f\in BV$ be any feasible point, i.e., any function satisfying $\hat{f}(\xi_i) = \hat{f^*}(\xi_i)$ for $i=1,...,n$. Choose $m$ large enough that $|\xi_i|_\infty \leq m/2$ for all $i$. Then
\begin{equation}
    f_m := V_mf\in \mathcal{T}_m^d
\end{equation}
is also a feasible point (recall the definition of the de la Vall\'ee Poussin sum \eqref{de-la-vallee-poisson-definition-equation}), and
\begin{equation}
    \|f_m\|_{BV} \leq C\|f\|_{BV}
\end{equation}
since the de la Vall\'ee Poussin kernel is bounded in $L_1$ and $BV$ is a translation invariant norm. Thus, it suffices to consider trigonometric polynomials of degree $m$ in order to obtain a near optimizer. On the space of trigonometric polynomials, the $BV$-norm is given by
\begin{equation}
    \|f_m\|_{BV} = \|\nabla f_m\|_{L_1}.
\end{equation}
The gradients of trigonometric polynomials can be calculated exactly (and are again trigonometric polynomials of the same degree), and the $L_1$-norm can be discretized up to a constant factor by computing the discrete $\ell_1$-norm on an oversampled grid, i.e., on a grid with spacing $2\pi/(4m+1)$, via the Marcinkiewicz-Zygmund inequality (see for instance Chapter 10 of \cite{zygmund2002trigonometric}). In this way, the entire problem can be reduced to a discrete linear programming problem of the form
\begin{equation}
    \min_{\substack{x\in \mathbb{R}^M\\Ax = b}} \|Bx\|_{\ell_1}
\end{equation}
for appropriate matrices $A$ and $B$. We solve this latter problem using the well-known alternating method of multipliers (ADMM) \cite{eckstein1992douglas}. All implementation details and the code for running these experiments can be found at \href{https://github.com/jwsiegel2510/Fourier-sampling-experiments}{https://github.com/jwsiegel2510/Fourier-sampling-experiments}.

Finally, let us remark that the phenomenon observed in our experiments already occurs for only $289$ Fourier measurements. Since the \textit{rates} of edge recovery are better using hierarchical random sub-sampling, this phenomenon will only become more pronounced as the number of samples is increased. As an example, we show the results of the same experiment with $1089$ sampled frequencies in Figures \ref{lowest-frequencies-bv-norm-min-plot-figure-high} (sampling the lowest frequencies) and \ref{hierarchical-bv-norm-min-plot-figure-high} (hierarchical random subsampling). The reconstruction method in both cases is by solving the BV-norm minimization problem \eqref{BV-reconstruction-optimization}. We see that when sampling the lowest frequencies, the corners and triple interesection points are still distorted, while the hierarchical random sub-sampling scheme produces an essentially perfect reconstruction. 

\section{Conclusion}\label{conclusion-section}
Using tools from compressive sensing, we have estimated the Fourier sampling numbers for Besov spaces with error measured in $L_p$. This provides a theoretical foundation for the application of non-linear reconstruction and compressive sensing methods to imaging problems such as MRI. In particular, we have developed the \textit{numerical analysis} required to apply the techniques of compressive sensing, which typically concern the recovery of sparse discrete signals from discrete measurements, to the problem of recovering a continuous function from continuous Fourier measurements. The key conclusion is that by hierarchically sub-sampling the Fourier coefficients in an appropriate manner, we can recover edges in the ground truth much more accurately than by sampling the lowest Fourier modes.

An important open problem, which we do not address, is the development of an analogous theory for Radon measurements. Such a theory would explain how to optimally sample for CT imaging applications, analogous to our theory which models MRI measurements.

\section{Acknowledgements}
This work was supported by the National Science Foundation (DMS-2424305) as well as the MURI ONR grant N00014-20-1-2787. We would like to thank Ronald DeVore, Rahul Parhi, Robert Nowak, Guergana Petrova, Albert Cohen, Tino Ullrich, and Mario Ullrich for helpful discussions regarding this work.

\bibliographystyle{amsplain}
\bibliography{refs}

\providecommand{\bysame}{\leavevmode\hbox to3em{\hrulefill}\thinspace}
\providecommand{\MR}{\relax\ifhmode\unskip\space\fi MR }
\providecommand{\MRhref}[2]{%
  \href{http://www.ams.org/mathscinet-getitem?mr=#1}{#2}
}
\providecommand{\href}[2]{#2}
\begin{thebibliography}{10}

\bibitem{adcock2016generalized}
Ben Adcock and Anders~C Hansen, \emph{Generalized sampling and
  infinite-dimensional compressed sensing}, Foundations of Computational
  Mathematics \textbf{16} (2016), no.~5, 1263--1323.

\bibitem{adcock2017breaking}
Ben Adcock, Anders~C Hansen, Clarice Poon, and Bogdan Roman, \emph{Breaking the
  coherence barrier: A new theory for compressed sensing}, Forum of
  mathematics, sigma, vol.~5, Cambridge University Press, 2017, p.~e4.

\bibitem{binev2024optimal}
Peter Binev, Andrea Bonito, Ronald DeVore, and Guergana Petrova, \emph{Optimal
  learning}, Calcolo \textbf{61} (2024), no.~1, 15.

\bibitem{bonito2025convergence}
Andrea Bonito, Ronald DeVore, Guergana Petrova, and Jonathan~W Siegel,
  \emph{Convergence and error control of consistent {PINN}s for elliptic
  {PDE}s}, IMA Journal of Numerical Analysis (2025), draf008.

\bibitem{candes2007sparsity}
Emmanuel Candes and Justin Romberg, \emph{Sparsity and incoherence in
  compressive sampling}, Inverse problems \textbf{23} (2007), no.~3, 969.

\bibitem{candes2006robust}
Emmanuel~J Cand{\`e}s, Justin Romberg, and Terence Tao, \emph{Robust
  uncertainty principles: Exact signal reconstruction from highly incomplete
  frequency information}, IEEE Transactions on information theory \textbf{52}
  (2006), no.~2, 489--509.

\bibitem{candes2005decoding}
Emmanuel~J Candes and Terence Tao, \emph{Decoding by linear programming}, IEEE
  transactions on information theory \textbf{51} (2005), no.~12, 4203--4215.

\bibitem{candes2006near}
\bysame, \emph{Near-optimal signal recovery from random projections: Universal
  encoding strategies?}, IEEE transactions on information theory \textbf{52}
  (2006), no.~12, 5406--5425.

\bibitem{cheraghchi2013restricted}
Mahdi Cheraghchi, Venkatesan Guruswami, and Ameya Velingker, \emph{Restricted
  isometry of {F}ourier matrices and list decodability of random linear codes},
  SIAM Journal on Computing \textbf{42} (2013), no.~5, 1888--1914.

\bibitem{cohen2009compressed}
Albert Cohen, Wolfgang Dahmen, and Ronald DeVore, \emph{Compressed sensing and
  best $k$-term approximation}, Journal of the American mathematical society
  \textbf{22} (2009), no.~1, 211--231.

\bibitem{devore1993constructive}
Ronald~A DeVore and George~G Lorentz, \emph{Constructive approximation}, vol.
  303, Springer Science \& Business Media, 1993.

\bibitem{dolbeault2023sharp}
Matthieu Dolbeault, David Krieg, and Mario Ullrich, \emph{A sharp upper bound
  for sampling numbers in {$L_2$}}, Applied and Computational Harmonic Analysis
  \textbf{63} (2023), 113--134.

\bibitem{donoho2006compressed}
David~L Donoho, \emph{Compressed sensing}, IEEE Transactions on information
  theory \textbf{52} (2006), no.~4, 1289--1306.

\bibitem{dung2018hyperbolic}
Dinh D{\~u}ng, Vladimir Temlyakov, and Tino Ullrich, \emph{Hyperbolic cross
  approximation}, Springer, 2018.

\bibitem{eckstein1992douglas}
Jonathan Eckstein and Dimitri~P Bertsekas, \emph{On the douglas—rachford
  splitting method and the proximal point algorithm for maximal monotone
  operators}, Mathematical programming \textbf{55} (1992), no.~1, 293--318.

\bibitem{foucart2013invitation}
Simon Foucart and Holger Rauhut, \emph{An invitation to compressive sensing}, A
  mathematical introduction to compressive sensing, Springer, 2013, pp.~1--39.

\bibitem{garnaev1984widths}
Andrei~Yur'evich Garnaev and Efim~Davydovich Gluskin, \emph{The widths of a
  {E}uclidean ball}, Doklady Akademii Nauk, vol. 277, Russian Academy of
  Sciences, 1984, pp.~1048--1052.

\bibitem{guedon2008majorizing}
Olivier Gu{\'e}don, Shahar Mendelson, Alain Pajor, and Nicole
  Tomczak-Jaegermann, \emph{Majorizing measures and proportional subsets of
  bounded orthonormal systems}, Revista Matematica Iberoamericana \textbf{24}
  (2008), no.~3, 1075--1095.

\bibitem{guerquin2011fast}
Matthieu Guerquin-Kern, M~Haberlin, Klaas~Paul Pruessmann, and Michael Unser,
  \emph{A fast wavelet-based reconstruction method for magnetic resonance
  imaging}, IEEE transactions on medical imaging \textbf{30} (2011), no.~9,
  1649--1660.

\bibitem{guerquin2011realistic}
Matthieu Guerquin-Kern, Laurent Lejeune, Klaas~Paul Pruessmann, and Michael
  Unser, \emph{Realistic analytical phantoms for parallel magnetic resonance
  imaging}, IEEE Transactions on Medical Imaging \textbf{31} (2011), no.~3,
  626--636.

\bibitem{haviv2017restricted}
Ishay Haviv and Oded Regev, \emph{The restricted isometry property of
  subsampled {F}ourier matrices}, Geometric Aspects of Functional Analysis:
  Israel Seminar (GAFA) 2014--2016, Springer, 2017, pp.~163--179.

\bibitem{kashin1977widths}
B~Kashin, \emph{The widths of certain finite-dimensional sets and classes of
  smooth functions izv}, Akad. Nauk SSSR Ser. Mat \textbf{41} (1977), 334--351.

\bibitem{krieg2021function}
David Krieg and Mario Ullrich, \emph{Function values are enough for
  {$L_2$}-approximation}, Foundations of Computational Mathematics \textbf{21}
  (2021), no.~4, 1141--1151.

\bibitem{krieg2021function2}
\bysame, \emph{Function values are enough for {$L_2$}-approximation: Part
  {II}}, Journal of Complexity \textbf{66} (2021), 101569.

\bibitem{kuchment2013radon}
Peter Kuchment, \emph{The {R}adon transform and medical imaging}, SIAM, 2013.

\bibitem{larson2011fast}
Peder~EZ Larson, Simon Hu, Michael Lustig, Adam~B Kerr, Sarah~J Nelson, John
  Kurhanewicz, John~M Pauly, and Daniel~B Vigneron, \emph{Fast dynamic 3d mr
  spectroscopic imaging with compressed sensing and multiband excitation pulses
  for hyperpolarized 13c studies}, Magnetic resonance in medicine \textbf{65}
  (2011), no.~3, 610--619.

\bibitem{lorentz1996constructive}
George~G Lorentz, Manfred von Golitschek, and Yuly Makovoz, \emph{Constructive
  approximation: advanced problems}, vol. 304, Citeseer, 1996.

\bibitem{lustig2007sparse}
Michael Lustig, David Donoho, and John~M Pauly, \emph{Sparse mri: The
  application of compressed sensing for rapid mr imaging}, Magnetic Resonance
  in Medicine: An Official Journal of the International Society for Magnetic
  Resonance in Medicine \textbf{58} (2007), no.~6, 1182--1195.

\bibitem{lustig2008compressed}
Michael Lustig, David~L Donoho, Juan~M Santos, and John~M Pauly,
  \emph{Compressed sensing mri}, IEEE signal processing magazine \textbf{25}
  (2008), no.~2, 72--82.

\bibitem{malykhin2021kolmogorov}
Yuri~V Malykhin, \emph{Kolmogorov widths of the {B}esov classes and products of
  octahedra}, Proceedings of the Steklov Institute of Mathematics \textbf{312}
  (2021), no.~1, 215--225.

\bibitem{malykhin2025kolmogorov}
\bysame, \emph{Kolmogorov widths of the class {$W^1_1$}}, Matematicheskie
  Zametki \textbf{117} (2025), no.~6, 922--927.

\bibitem{micchelli1976optimal}
Charles~A Micchelli, Th~J Rivlin, and Shmuel Winograd, \emph{The optimal
  recovery of smooth functions}, Numerische Mathematik \textbf{26} (1976),
  no.~2, 191--200.

\bibitem{natterer2001mathematical}
Frank Natterer and Frank W{\"u}bbeling, \emph{Mathematical methods in image
  reconstruction}, SIAM, 2001.

\bibitem{nikol1951inequalities}
Sergei~Mikhailovich Nikol'skii, \emph{Inequalities for entire functions of
  finite degree and their application in the theory of differentiable functions
  of several variables}, Trudy Matematicheskogo Instituta imeni VA Steklova
  \textbf{38} (1951), 244--278.

\bibitem{novak2006function}
Erich Novak and Hans Triebel, \emph{Function spaces in lipschitz domains and
  optimal rates of convergence for sampling}, Constructive approximation
  \textbf{23} (2006), no.~3, 325--350.

\bibitem{pinkus2012n}
Allan Pinkus, \emph{N-widths in approximation theory}, vol.~7, Springer Science
  \& Business Media, 2012.

\bibitem{romberg2008imaging}
Justin Romberg, \emph{Imaging via compressive sampling}, IEEE signal processing
  magazine \textbf{25} (2008), no.~2, 14--20.

\bibitem{rudelson2008sparse}
Mark Rudelson and Roman Vershynin, \emph{On sparse reconstruction from
  {F}ourier and {G}aussian measurements}, Communications on Pure and Applied
  Mathematics: A Journal Issued by the Courant Institute of Mathematical
  Sciences \textbf{61} (2008), no.~8, 1025--1045.

\bibitem{studer2012compressive}
Vincent Studer, J{\'e}rome Bobin, Makhlad Chahid, Hamed~Shams Mousavi, Emmanuel
  Candes, and Maxime Dahan, \emph{Compressive fluorescence microscopy for
  biological and hyperspectral imaging}, Proceedings of the National Academy of
  Sciences \textbf{109} (2012), no.~26, E1679--E1687.

\bibitem{talagrand1998selecting}
Michel Talagrand, \emph{Selecting a proportion of characters}, Israel Journal
  of Mathematics \textbf{108} (1998), no.~1, 173--191.

\bibitem{traub1973theory}
Joseph~Frederick Traub, \emph{Theory of optimal algorithms}, Carnegie-Mellon
  University. Department of Computer Science, 1973.

\bibitem{tsaig2006extensions}
Yaakov Tsaig and David~L Donoho, \emph{Extensions of compressed sensing},
  Signal processing \textbf{86} (2006), no.~3, 549--571.

\bibitem{twieg1983k}
Donald~B Twieg, \emph{The k-trajectory formulation of the nmr imaging process
  with applications in analysis and synthesis of imaging methods}, Medical
  physics \textbf{10} (1983), no.~5, 610--621.

\bibitem{ullrich2025sampling}
Mario Ullrich, \emph{Sampling and entropy numbers in the uniform norm}, arXiv
  preprint arXiv:2507.20770 (2025).

\bibitem{vybiral2007sampling}
Jan Vyb{\'\i}ral, \emph{Sampling numbers and function spaces}, Journal of
  Complexity \textbf{23} (2007), no.~4-6, 773--792.

\bibitem{wang2014novel}
Qiu Wang, Michael Zenge, Hasan~Ertan Cetingul, Edgar Mueller, and Mariappan~S
  Nadar, \emph{Novel sampling strategies for sparse {MR} image reconstruction},
  Proc. Int. Soc. Mag. Res. in Med.,(22) (2014).

\bibitem{zygmund2002trigonometric}
Antoni Zygmund, \emph{Trigonometric series}, vol.~1, Cambridge university
  press, 2002.

\end{thebibliography}
\end{document}